\documentclass[12pt, reqno]{amsart}
\usepackage{amsmath, amsthm, amscd, amsfonts, amssymb, graphicx, color}
\usepackage{mathrsfs}
\usepackage{graphicx}
\usepackage{caption}
\usepackage{caption}
\usepackage{subcaption}
\usepackage[bookmarksnumbered, colorlinks, plainpages]{hyperref}
\hypersetup{colorlinks=true,linkcolor=red, anchorcolor=green, citecolor=cyan, urlcolor=red, filecolor=magenta, pdftoolbar=true}

\newtheorem{theorem}{Theorem}[section]
\newtheorem{lemma}[theorem]{Lemma}

\newtheorem{example}[theorem]{Example}
\newtheorem{question}[theorem]{Question}

\newtheorem{note}[theorem]{Note}

\theoremstyle{remark}
\newtheorem{remark}[theorem]{\bf{Remark}}
\numberwithin{equation}{section}
\makeatletter
\def\subsection{\@startsection{subsection}{2}%
  \z@{-.8\linespacing\@plus-.2\linespacing}%
  {.5\linespacing\@plus.2\linespacing}%
  {\centering\normalfont\bfseries}}
\makeatother
\allowdisplaybreaks
\begin{document}

\title [On the shape of the numerical range of some tridiagonal matrices ]{On the shape of the numerical range of some tridiagonal matrices  }
	\author[A. Ghosh, R. Birbonshi, S. Ojha, K. Paul]{Arobinda Ghosh,  Riddhick Birbonshi, Sarita Ojha and Kallol Paul}
	
\address[Ghosh]{Department of Mathematics, Jadavpur University, Kolkata 700032, West Bengal, India}
\email{7aghosh@gmail.com}

\address[Birbonshi] {Department of Mathematics, Jadavpur University, Kolkata 700032, West Bengal, India}
\email{riddhick.math@gmail.com}

\address[Ojha]{Department of Mathematics\\
Indian Institute of Engineering Science and Technology, Shibpur\\
Howrah 711103\\
West Bengal, India}
\email{sarita.ojha89@gmail.com}
\address[Paul]{ Vice-Chancellor, Kalyani University, West Bengal 741235 and Professor (on lien), Department of Mathematics, Jadavpur University, Kolkata 700032,
West Bengal, India}
\email{kalloldada@gmail.com}

\subjclass[2020]{15A60, 47A12}

\keywords{Numerical Range, Tridiagonal Matrices }

\maketitle	

\begin{abstract} 
	 In this paper, we study the elliptical shape of the numerical range of tridiagonal matrices of orders $3,4$ and $5$ with  zero entries on the main diagonal, a positive constant value $a$ along the super-diagonal, and sub-diagonal entries are 
     \begin{align*}
  \begin{cases}
        r, sr^2, r^3, sr^4,\ldots, r^{n-1} & \mbox{ if $n$ is even, and }\\
        r, sr^2, r^3, sr^4,\ldots, sr^{n-1} & \mbox{ if $n$ is odd},
    \end{cases}       
     \end{align*}   
where $n$ denotes the order of the matrix, $s>0$ and $r$ is a real number. We further investigate the flat portions on the boundary of the numerical range of these $n\times n$ matrices  in the special case $r=-1$. Some results given in \cite{chien2011numerical} are obtained as a particular case of our results.
\end{abstract}

\section{Introduction and preliminaries}

Let $M_n$ denote the set of all $n\times n$ matrices. For $A\in M_n$, the numerical range of $A$, denoted by $W(A)$, is the set
$$W(A)=\{\langle Ax,x \rangle : ||x||=1\},$$ 
where  $\langle \cdot , \cdot \rangle$ is the inner product and  $||\cdot||$  is the norm on $\mathbb{C}^n$. For $A\in M_n$, $A^*$ denotes the adjoint of $A.$ It is a classical result  that the numerical range 
$W(A) $ is a convex, and compact subset of the complex plane (see   \cite{gustafson1997numerical}) containing the spectrum $\sigma(A)$ of $A$. Consequently, the convex hull of the spectrum i.e., $\mbox{conv}\,\,\sigma(A)$ is contained in  $W(A)$. Moreover, when $A$ is normal, $W(A)=\mbox{conv}\,\, \sigma(A)$, so the boundary $\partial W(A)$ of the numerical range consists exclusively of flat portions.

A matrix $A=(a_{ij})$ is a tridiagonal matrix if its entries
$a_{ij}$ are zero for $|i-j|>1$ and
 a tridiagonal matrix $A$ is  called improper if it contains a zero pair of corresponding off-diagonal entries: $a_{i,i+1}=a_{i+1,i}=0$ for some $i$, and proper otherwise.

 In \cite{chien2011numerical}, Chien and Nakazato obtained some results on the elliptical numerical ranges of some tridiagonal matrices as below:
 \begin{eqnarray}\label{A(n,r)}
     A(n,r)=\begin{bmatrix}
         0 & 1 & 0 & 0 & \cdots & 0\\
         r & 0 & 1 & 0 & \cdots & 0\\
         0 & r^2 & 0 & 1 & \cdots & 0\\
         0 & 0 & r^3 & 0 & \cdots & 0\\
         \vdots & \vdots & \vdots & \vdots & \ddots & 1\\
         0 & 0 & 0 & \cdots & r^{n-1} & 0
     \end{bmatrix},
 \end{eqnarray} 
 where $r \in \mathbb{R}$.
Motivated by their results, we consider the tridiagonal matrix of the form
 \begin{eqnarray} \label{A(n)}
\begin{bmatrix}
      0 & a & 0 & \cdots&0\\
      p_1 & 0 & a & \cdots&0\\
      0 & p_2 & 0 &\cdots&0\\
      \vdots & \vdots & \vdots &\ddots& a\\
       0 &0 &\cdots& p_{n-1}& 0
\end{bmatrix}.
\end{eqnarray}
For $n\geq 2$, the $p_i$'s are chosen so that 
\begin{equation*}
p_{i-1}=\begin{cases}
    r^{i-1}, & i \mbox{ is even}\\
     sr^{i-1}, & i \mbox{ is odd}
\end{cases}
\end{equation*}
where $i=2,3, \ldots, n$ and $a,s>0$.\\

Thus $A(n,s,a,r)$ can take the following forms for even and odd $n$; 
\begin{eqnarray}\label{even}  
\begin{bmatrix}
      0 & a & 0 & 0 & \cdots&0&\\
      r & 0 & a & 0 &\cdots &0&\\
      0 & sr^2 & 0 & a &\cdots&0&\\
      0 & 0 & r^3 & 0 &\cdots& 0&\\
      \vdots &\vdots& \vdots& \vdots&\ddots& a &\\
       0 &0 & 0 &\cdots& r^{n-1}&0&
\end{bmatrix} \end{eqnarray}
 and
\begin{eqnarray}\label{odd2}  
\begin{bmatrix}
0 & a & 0 & 0 & \cdots&0&\\
      r & 0 & a & 0 &\cdots &0&\\
      0 & sr^2 & 0 & a &\cdots&0&\\
      0 & 0 & r^3 & 0 &\cdots& 0&\\
      \vdots &\vdots& \vdots& \vdots&\ddots& a &\\
       0 &0 & 0 &\cdots& sr^{n-1}&0&
\end{bmatrix} \end{eqnarray}
respectively.
Throughout this paper,  we focus on only matrices of the form $A(n,s,a,r)$ where  $r\in \mathbb{R}$ and $a,s>0$. 

 Clearly for $a=s=1$ in (\ref{even}) and (\ref{odd2}), the matrix $A(n,s,a,r)$ becomes $A(n,r)$ as given in \eqref{A(n,r)}. For further information on the numerical range of  tridiagonal matrices and operators, we refer to \cite{brown2004matrices,chien1996numerical, chien1998geometric, chien2015numerical}.\\
The organization of the present article is as follows:\\
In section \ref{section2}, we show that the numerical range of the matrix  $A(n,s,a,r)$  defined in \eqref{A(n)}, is an elliptical disk for $n=3$. In section \ref{section3} and \ref{section4}, We give a complete characterization of the shape of the numerical range of the matrices  $A(n,s,a,r)$  defined in \eqref{even} for $n=4$ and \eqref{odd2} for $n=5$ respectively, is an elliptical disk with origin as its center. In section \ref{section5}, we investigate the flat portions of the boundary of the numerical range for  the matrices $A(n,s,a,r)$ with $r=-1$. Some results given in \cite{chien2011numerical} are obtained as a particular case of our results.

We end this section with the following theorem, which will be used repeatedly. 
\begin{theorem}\cite[Theorem 1]{chien1998geometric}\label{largest}
 Let $A$ be an $n\times n$ matrix over $\mathbb{C}$ and $\alpha,\beta\in \mathbb{R}$. Then  the numerical range 
 $W(A)$ is an elliptical disk with origin as its center, whose horizontal and vertical semi-axes of lengths $\alpha$ and $\beta$, respectively if and only if 
\begin{eqnarray}\label{st}
    \lambda_{\max}(H_{\theta}(A))=\sqrt{\alpha^2 -(\alpha^2-\beta^2) \sin^2 \theta}
\end{eqnarray}  for $\theta \in [0,2\pi),$ where $H_\theta(A)=\frac{1}{2}(e^{i \theta}A+e^{-i \theta}A^*)$ denotes the Hermitian part of the matrix $e^{i\theta }A$, and $\lambda_{\max}(H_{\theta}(A))$ denotes the largest eigenvalue of $H_\theta(A)$.
\end{theorem}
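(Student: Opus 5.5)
The plan is to prove this through support functions. A compact convex set $K\subseteq\mathbb{C}$ is determined by its support function
$$h_K(\theta)=\max_{z\in K}\operatorname{Re}(e^{i\theta}z).$$
For the numerical range we have $\operatorname{Re}\langle e^{i\theta}Ax,x\rangle=\langle H_\theta(A)x,x\rangle$ for every unit vector $x$. Taking the maximum over unit vectors and applying the Rayleigh--Ritz principle gives
$$h_{W(A)}(\theta)=\lambda_{\max}(H_\theta(A)).$$
Since $W(A)$ is convex and compact, it equals the intersection of its supporting half-planes $\{z:\operatorname{Re}(e^{i\theta}z)\le h_{W(A)}(\theta)\}$, $\theta\in[0,2\pi)$. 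Hence two compact convex sets coincide if and only if their support functions coincide, and the theorem reduces to computing the support function of the elliptical disk.

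For the computation, let $E$ be the closed elliptical disk centered at the origin with horizontal semi-axis $\alpha$ and vertical semi-axis $\beta$, taking $\alpha,\beta\ge 0$ (degenerate segments allowed). Parametrize it as $E=\{\alpha u+i\beta v: u^2+v^2\le 1\}$. Then
$$\operatorname{Re}\bigl(e^{i\theta}(\alpha u+i\beta v)\bigr)=\alpha u\cos\theta-\beta v\sin\theta .$$
By Cauchy--Schwarz its maximum over the unit disk is
$$\sqrt{\alpha^2\cos^2\theta+\beta^2\sin^2\theta}=\sqrt{\alpha^2-(\alpha^2-\beta^2)\sin^2\theta},$$
which is exactly the right-hand side of \eqref{st}.

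Both directions then follow. If $W(A)=E$, then $\lambda_{\max}(H_\theta(A))=h_E(\theta)$, which is \eqref{st}. Conversely, if \eqref{st} holds for all $\theta$, then $h_{W(A)}=h_E$ on $[0,2\pi)$, so $W(A)=E$ by the intersection-of-half-planes representation.

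The only point needing care is this uniqueness step: a convex compact set is recovered from its support function. I would state it as the standard separation argument. If $z_0\notin W(A)$, there is a $\theta$ with $\operatorname{Re}(e^{i\theta}z_0)>h_{W(A)}(\theta)$. Applied symmetrically to both sets, this gives equality. The remainder is routine, and no substantial obstacle is expected.
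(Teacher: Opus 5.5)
Your argument is correct and complete. The paper gives no proof to compare it with: the statement is quoted from Chien--Yeh--Yeh--Lin and used as a black box in Sections 2--4.

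What you give is the standard self-contained justification, in three steps.
\begin{itemize}
\item The identity $\operatorname{Re}\langle e^{i\theta}Ax,x\rangle=\langle H_\theta(A)x,x\rangle$, together with Rayleigh--Ritz, identifies $\lambda_{\max}(H_\theta(A))$ as the support function of $W(A)$ in the direction $e^{-i\theta}$.
\item Cauchy--Schwarz gives $\sqrt{\alpha^2\cos^2\theta+\beta^2\sin^2\theta}$ as the support function of the axis-aligned ellipse.
\item Strict separation from the compact convex set $W(A)$ (Toeplitz--Hausdorff) shows that support functions determine the set.
\end{itemize}
The only inputs are convexity and compactness of $W(A)$.

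Your parametrization gets the orientation right: $\theta=0$ picks out the horizontal semi-axis $\alpha$, and $\theta=\pi/2$ picks out the vertical one $\beta$, since $\operatorname{Re}(iz)=-\operatorname{Im} z$. It also covers the degenerate cases, a segment or the point $\{0\}$, uniformly. That inclusive reading is the one the paper relies on, for example in the proof of Theorem \ref{W4}. There it first invokes the theorem to get ``an elliptical disk'' and only afterwards rules out degeneracy by non-normality.

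One harmless point: the statement allows $\alpha,\beta\in\mathbb{R}$, so the semi-axes are really $|\alpha|$ and $|\beta|$. Your restriction to $\alpha,\beta\ge 0$ loses nothing, because only $\alpha^2$ and $\beta^2$ enter the formula.
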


\section{Numerical range of $A(3,s,a,r)$}\label{section2}
We start with the numerical range of tridiagonal matrices of the form \eqref{odd2} corresponding to $n=3$.
\begin{theorem}\label{n=3} Suppose $a,s>0$. Then for any $r \in \mathbb{R}$, the numerical range 
     $W(A(3,s,a,r))$ is an elliptical disk whose boundary is given by 
    \begin{eqnarray*}
        x(\theta)&=&\frac{1}{2}\sqrt{2a^2+2ar+2asr^2+r^2+s^2r^4} \cos \theta,\\
        y(\theta)&=&\frac{1}{2}\sqrt{2a^2-2ar-2asr^2+r^2+s^2r^4}\sin \theta,\,\,\,\,\,\,\,\,\mbox{ $\theta \in [0,2\pi)$}.
    \end{eqnarray*}
    Further, let $ s\geq \frac{1}{a}$, $f(r)=2s^2r^3+(1-2as)r-a$, and let $x_0$ be the unique real root  of the equation $g(r)=2s^2r^3+(1+2as)r+a=0$, which is negative.\\
    If $a\leq r_2\leq r_1$  or $r_1\leq r_2\leq z_0$, then $W(A(3,s,a,r_2))\subseteq W(A(3,s,a,r_1))$, where 
    \begin{center}
        $z_0=\begin{cases}
            \min\{x_0,\beta,\gamma\}, & \mbox{ if $f(r)=0$ has two negative roots $\beta, \gamma$}\\
            x_0, &  \mbox{ if $f(r)=0$ has no negative  root.}
        \end{cases}$
    \end{center}
\end{theorem}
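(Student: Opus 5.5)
The plan is to compute $\lambda_{\max}(H_\theta(A))$ explicitly and apply Theorem \ref{largest}. Write $A=A(3,s,a,r)$. Then $H_\theta(A)$ is a $3\times 3$ Hermitian tridiagonal matrix with zero diagonal and off-diagonal entries
\[
h_{12}=\tfrac12\left(ae^{i\theta}+re^{-i\theta}\right),\qquad h_{23}=\tfrac12\left(ae^{i\theta}+sr^2e^{-i\theta}\right).
\]
For such a matrix the characteristic polynomial is $\lambda^3-(|h_{12}|^2+|h_{23}|^2)\lambda$. Hence the eigenvalues are $0$ and $\pm\sqrt{|h_{12}|^2+|h_{23}|^2}$, and
\[
\lambda_{\max}(H_\theta(A))^2=\tfrac14\left(2a^2+r^2+s^2r^4+2a(r+sr^2)\cos 2\theta\right).
\]
Substituting $\cos2\theta=1-2\sin^2\theta$ puts this in the form $\alpha^2-(\alpha^2-\beta^2)\sin^2\theta$, with
\[
4\alpha^2=(a+r)^2+(a+sr^2)^2,\qquad 4\beta^2=(a-r)^2+(a-sr^2)^2 .
\]
These are exactly the quantities under the square roots in the statement. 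Both are sums of squares, so $\alpha,\beta\ge 0$ are genuine semi-axes. Theorem \ref{largest} then shows that $W(A)$ is the stated elliptical disk centered at the origin.

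For the inclusion, both disks are centered at the origin with axes along the coordinate axes. Since the support function of each disk in direction $\theta$ is $\sqrt{\alpha^2\cos^2\theta+\beta^2\sin^2\theta}$, we have $W(A(3,s,a,r_2))\subseteq W(A(3,s,a,r_1))$ as soon as $\alpha(r_2)\le\alpha(r_1)$ and $\beta(r_2)\le\beta(r_1)$. Put $F(r)=(a+r)^2+(a+sr^2)^2$ and $G(r)=(a-r)^2+(a-sr^2)^2$. A direct differentiation gives $F'(r)=2g(r)$ and $G'(r)=2f(r)$, so it suffices to show that $F$ and $G$ are both nondecreasing on $[a,\infty)$ and both nonincreasing on $(-\infty,z_0]$.

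On $[a,\infty)$ we have $g(r)>0$ trivially. For $f$, write
\[
f(r)=2sr(sr^2-a)+(r-a)\ \ge\ 2sra(sa-1)+(r-a)\ \ge 0,
\]
using $r\ge a$ and the hypothesis $s\ge 1/a$. This is the only place that hypothesis is needed.

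On the negative side, $g'(r)=6s^2r^2+1+2as>0$, so $g$ is strictly increasing. Together with $g(0)=a>0$, this shows $g$ has a unique real root $x_0<0$ and $g<0$ on $(-\infty,x_0)$. For $f$ we have $f(0)=-a<0$ and $f\to-\infty$ as $r\to-\infty$. If $f$ has no negative root, then $f<0$ on all of $(-\infty,0]$. If $f$ has negative roots, then since the sum of the roots is $0$ and their product is $a/(2s^2)>0$, there are exactly two, $\beta$ and $\gamma$, and $f<0$ to the left of $\min\{\beta,\gamma\}$. In either case $f\le0$ and $g\le0$ on $(-\infty,z_0]$, so $F$ and $G$ are nonincreasing there, which gives the inclusion for $r_1\le r_2\le z_0$.

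The main obstacle is this sign analysis of the cubic $f$ for negative $r$, in particular the case distinction on its negative roots. The elliptical shape itself follows directly from the explicit eigenvalues of a $3\times3$ zero-diagonal tridiagonal Hermitian matrix.
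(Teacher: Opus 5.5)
Your proof is correct and follows the paper's route: you compute $\lambda_{\max}(H_\theta)$ from the characteristic polynomial $\lambda^3-(|h_{12}|^2+|h_{23}|^2)\lambda$, apply Theorem~\ref{largest}, and then analyze the signs of $g=F'/2$ and $f=G'/2$ on $[a,\infty)$ and on $(-\infty,z_0]$. Your version also supplies steps the paper only asserts: the bound $f(r)\ge 2sra(sa-1)+(r-a)\ge 0$ for $r\ge a$, the root structure of $f$ and $g$ via monotonicity of $g$ and the product of the roots of $f$ (in place of Descartes' rule), and the support-function argument that monotonicity of both semi-axes gives the inclusion.
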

\begin{proof} 
 The characteristic polynomial of $H_{\theta}\big(A(3,s,a,r)\big)$ is given by 
 \begin{eqnarray*}
\mbox{det}\big(H_{\theta}(A(3,s,a,r)) -xI\big)=x\Big(\frac{1}{4}|ae^{i \theta}+re^{-i \theta}|^2-x^2+\frac{1}{4}|ae^{i \theta}+sr^2e^{-i \theta}|^2\Big).   \label{hati} 
\end{eqnarray*}
So, the largest eigenvalue of $H_{\theta}\big(A(3,s,a,r)\big)$, i.e., $\lambda_{\max}(H_{\theta}\big(A(3,s,a,r))\big)$, is determined by the quadratic equation
\[x^2=\frac{1}{4}(p^2\cos^2\theta +q^2 \sin^2 \theta),\]
 where 
 \begin{eqnarray*}
     p^2 &=& 2a^2+2ar+2asr^2+r^2+s^2r^4=(a+r)^2+(a+sr^2)^2,\\
     \mbox{and } \ q^2 &=& 2a^2-2ar-2asr^2+r^2+s^2r^4=(a-r)^2+(a-sr^2)^2.
 \end{eqnarray*}
 Thus,   \[\lambda_{\max}(H_{\theta}\big(A(3,s,a,r))\big)=\frac{1}{2}\sqrt{p^2\cos^2\theta +q^2 \sin^2 \theta}=\frac{1}{2}\sqrt{p^2 -(p^2-q^2) \sin^2 \theta}.\] 
Hence,  by Theorem \ref{largest}, it is clear that $W(A(3,s,a,r))$ is an elliptical disk centered at the origin with horizontal and vertical axes of lengths $p$ and $q$, respectively.\\ 
Hence \begin{eqnarray*}
    x(\theta)=\frac{p}{2}\cos\theta \,\, \mbox{and}\,\,\,
    y(\theta)=\frac{q}{2}\sin \theta,  
\end{eqnarray*}
$\theta \in [0,2\pi)$.
Hence the result follows.\\
The derivative of $p^2$ and $q^2$ with respect to $r$ is \begin{eqnarray*}
   2g(r) =2(2s^2r^3+2asr+r+a)
\end{eqnarray*}
 and \begin{eqnarray*}
  2f(r) = 2(r-a)+4sr(sr^2-a),
\end{eqnarray*}
respectively. Our aim is to show that the functions $f(r)$ and $g(r)$ are simultaneously positive or simultaneously negative over the specified intervals.\\ 
For $s\geq\frac{1}{a}$, using Descartes' rule of sign, we obtain,
\begin{enumerate}
    \item the equation $g(r)=0$ has unique real root $x_0$, which is  negative, and,
    \item the equation $f(r)=0$ has either one positive root and two negative roots (say, $\beta, \gamma$) or, one positive root and two complex roots.
\end{enumerate} 
Now, consider the following cases.\\
  \textbf{Case-I:} Let $r\geq a$. In this case, the expressions $f(r)$ and $g(r)$ are both non-negative. So, $p^2$ and $q^2$ are both increasing functions of $r$.\\
  \textbf{Case-II:} Let $r\leq z_0$. Then $p^2$ is a decreasing function whenever $r\leq x_0$. Thus, $f(r)\le0$ and $g(r)\leq 0$ hold together if $r\leq z_0$. Hence the result follows. 
\end{proof}
\begin{remark}
    In particular, if we take $a=s=1$ in Theorem \ref{n=3},   then the matrix $A(3,s,a,r)$ reduces into the matrix 
 $A(3,r)$ and we obtain,
 $$x(\theta)=\frac{1}{2}\sqrt{2+2r+3r^2+r^4}\cos{\theta}$$
   and $$y(\theta)=\frac{1}{2}\sqrt{2-2r-r^2+r^4}\sin{\theta},$$ where $0\leq
     \theta < 2\pi$.

 Moreover, the equation
 $g(r)=2s^2r^3+(1+2as)r+a=0$ reduces to $2r^3+3r+1=0$ and its unique real root, which is negative, is 
$\frac{(-1+\sqrt{3})^{1/3}}{2^{2/3}}-\frac{1}{2(-1+\sqrt{3})^{1/3}}$. Likewise, $f(r)= 2s^2r^3-2asr+r-a=0$ reduces to $2r^3-r-1=0$, whose only positive root is  $1$, the remaining roots being non-real. Hence, by Theorem \ref{n=3}, we obtain $z_0=x_0$.

 Therefore, by Theorem \ref{n=3}, if $r_1\leq r_2 \leq x_0=\frac{(-1+\sqrt{3})^{1/3}}{2^{2/3}}-\frac{1}{2(-1+\sqrt{3})^{1/3}}$ or $1\leq r_2 \leq r_1$, then $W(A(3,r_2))\subseteq W(A(3,r_1))$, which is established in \cite[Theorem 2]{chien2011numerical}. 
\end{remark}

\section{Numerical range of $A(4,s,a,r)$}\label{section3}
We now discuss the shape of the numerical range associated with the tridiagonal matrices defined in \eqref{even} for $n=4$. Our aim is to determine the conditions under which the  shape of the numerical range of the matrices $A(4,s,a,r)$ becomes an elliptical disk. An essential step in establishing this is to show that the largest eigenvalue of $H_{\theta}(A)$ is of the form \eqref{st} given in Theorem \ref{largest}. For this purpose, we define the following polynomials in $t$, which will be used in the subsequent results.
  \begin{eqnarray*}
  \alpha_1(t) &=& (a-t^3)^2+(a-st^2)^2+(a-t)^2+2at(1+st+t^2),\\
 \alpha_2(t) &=& 2at(1+st+t^2),\\
\alpha_3(t) &=&  5a^4+t^{12}+(s^4-2)t^8+(8a^2s^2-4a^2+1)t^4\\&& +(2s^2+4a^2)t^6+2s^2t^{10}
   +4a^2t^2+4a^2st^5+4a^2st^3,\\
\alpha_4 (t)&=&4at\bigg(st^3+a^2+(a^2+1)t^2+3a^2st+t^8+st^7\\&&+(s^2-1)t^4+(s^2-1)t^6+s^3t^5\bigg),\\
\alpha_5 (t)&=&2a^2t^2\bigg(1+t^4+(s^2-2)t^2+2st^3+2st\bigg).
\end{eqnarray*}
Consider the following polynomial 
\begin{eqnarray}\label{Gamma1-t}
    \gamma_1(t)= 8\alpha_3(t) \alpha_5(t)-8(\alpha_5(t))^2-(\alpha_4(t))^2.
\end{eqnarray}
To prove our next result, following lemmas are important.
\begin{lemma}\label{alpha_1} For $\alpha_1 (t), \alpha_2 (t), \alpha_3 (t), \alpha_4(t)$ and $\alpha_5(t)$  the following relations are hold:
    \begin{enumerate}
         \item[(i)] $\alpha_1(t)-\alpha_2(t) \geq 0$.
        \item[(ii)] $\alpha_1(t)+\alpha_2(t) \geq 0$.
        \item[(iii)] $\alpha_3(t)-\alpha_5(t)>0$.
    \end{enumerate}
\end{lemma}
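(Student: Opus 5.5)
Parts (i) and (ii) follow directly from the definitions, since $\alpha_2(t)$ is exactly the last summand of $\alpha_1(t)$. For (i),
\[
\alpha_1(t)-\alpha_2(t)=(a-t^3)^2+(a-st^2)^2+(a-t)^2\geq 0,
\]
a sum of squares. For (ii), I will regroup
\[
\alpha_1(t)+\alpha_2(t)=(a-t^3)^2+(a-st^2)^2+(a-t)^2+4at+4ast^2+4at^3 .
\]
Expanding the squares, each cross term $-2at^3$, $-2ast^2$, $-2at$ combines with $+4at^3$, $+4ast^2$, $+4at$. This gives
\[
\alpha_1(t)+\alpha_2(t)=(a+t^3)^2+(a+st^2)^2+(a+t)^2\geq 0 .
\]
Thus $\alpha_1\pm\alpha_2=(a\mp t^3)^2+(a\mp st^2)^2+(a\mp t)^2$, and both inequalities are immediate.

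For (iii) I will compute $\alpha_3(t)-\alpha_5(t)$ explicitly. Expanding
\[
\alpha_5(t)=2a^2t^2+2a^2t^6+2a^2(s^2-2)t^4+4a^2st^5+4a^2st^3
\]
and subtracting from $\alpha_3(t)$, the terms $4a^2t^2$ and $2a^2t^6$ partly cancel. The terms $4a^2st^5$ and $4a^2st^3$ cancel exactly. The $t^4$ coefficient becomes $8a^2s^2-4a^2+1-2a^2s^2+4a^2=6a^2s^2+1$. This leaves
\[
\alpha_3(t)-\alpha_5(t)=5a^4+t^{12}+2s^2t^{10}+(s^4-2)t^8+(2s^2+2a^2)t^6+(6a^2s^2+1)t^4+2a^2t^2 .
\]
This is a polynomial in $u=t^2\geq 0$. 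Every coefficient except possibly that of $t^8$ is nonnegative, and the constant term $5a^4$ is strictly positive because $a>0$.

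The main obstacle is the term $(s^4-2)t^8$, which is negative when $s^4<2$. I will absorb it by AM--GM using the neighbouring terms. Since $t^{12}+t^4\geq 2t^8$, we get
\[
t^{12}+(s^4-2)t^8+t^4\geq s^4t^8\geq 0 .
\]
Here the $t^4$ used is taken from the coefficient $6a^2s^2+1$, which is at least $1$. The remaining terms $2s^2t^{10}$, $(2s^2+2a^2)t^6$, $6a^2s^2t^4$ and $2a^2t^2$ are all nonnegative. Hence
\[
\alpha_3(t)-\alpha_5(t)\geq 5a^4>0
\]
for all real $t$, which proves (iii). I should double-check the expansion of $\alpha_5$, in particular the $t^4$ coefficient, since the AM--GM step relies on the surviving $t^4$ coefficient being at least $1$.
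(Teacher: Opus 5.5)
Your proof is correct. The paper omits this proof as ``straightforward'', and your direct verification supplies exactly that check. For (i) and (ii) you use $\alpha_1\pm\alpha_2=(a\mp t^3)^2+(a\mp st^2)^2+(a\mp t)^2$. For (iii) you use the explicit expansion together with $t^{12}+(s^4-2)t^8+t^4=(t^6-t^2)^2+s^4t^8$.

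Your expansion of $\alpha_5$ is right, so the surviving $t^4$ coefficient $6a^2s^2+1$ is correct. Strict positivity $\alpha_3-\alpha_5\geq 5a^4>0$ uses the standing assumption $a>0$.
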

\begin{proof}
    The proof is straightforward and is therefore omitted.
\end{proof}
\begin{lemma}\label{ALPHA}
  If $r$  is a real root of the equation 
 $\gamma_1(t)=0$, then $\alpha_5(r) \geq 0$.
\end{lemma}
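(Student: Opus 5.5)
We argue by contradiction, using only the structure of $\gamma_1$ given in \eqref{Gamma1-t} and the positivity statement of Lemma \ref{alpha_1}(iii). Suppose $r$ is real, $\gamma_1(r)=0$, and $\alpha_5(r)<0$.

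The defining identity gives
\[
(\alpha_4(r))^2 \;=\; 8\alpha_3(r)\alpha_5(r)-8(\alpha_5(r))^2 \;=\; 8\alpha_5(r)\bigl(\alpha_3(r)-\alpha_5(r)\bigr).
\]
The left-hand side is a square of a real number, since all coefficients of $\alpha_4$ are real and $r\in\mathbb{R}$. Hence it is nonnegative.

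On the right-hand side, Lemma \ref{alpha_1}(iii) gives $\alpha_3(r)-\alpha_5(r)>0$. Together with the assumption $\alpha_5(r)<0$, the product $8\alpha_5(r)(\alpha_3(r)-\alpha_5(r))$ is strictly negative. This contradicts $(\alpha_4(r))^2\ge 0$, so $\alpha_5(r)\geq 0$.

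The argument reduces entirely to part (iii) of Lemma \ref{alpha_1}, so that is the step I would verify carefully, since its proof was omitted. I would first expand $\alpha_3(t)-\alpha_5(t)$. The $\alpha_5$ terms are $2a^2t^2+2a^2t^6+2a^2(s^2-2)t^4+4a^2st^5+4a^2st^3$. Subtracting them from $\alpha_3$ gives
\[
5a^4+t^{12}+(s^4-2)t^8+(6a^2s^2+1)t^4+(2s^2+2a^2)t^6+2s^2t^{10}+2a^2t^2 .
\]
The $4a^2st^5+4a^2st^3$ terms cancel exactly, leaving only even powers of $t$. The only coefficient that may be negative is $(s^4-2)t^8$. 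It is absorbed by AM--GM: $t^{12}+t^4\ge 2t^8$, where the $t^4$ is taken from $(6a^2s^2+1)t^4$. With $5a^4>0$ as well, the expression is strictly positive for every real $t$, which is what the contradiction uses.
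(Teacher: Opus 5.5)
Your proof is correct and is the paper's argument: rewrite $\gamma_1(r)=0$ as $(\alpha_4(r))^2=8\alpha_5(r)\bigl(\alpha_3(r)-\alpha_5(r)\bigr)$ and use Lemma \ref{alpha_1}(iii) to force $\alpha_5(r)\ge 0$. Your check of (iii), which the paper omits, is also correct: the odd-degree terms cancel, and $t^{12}-2t^8+t^4=t^4(t^4-1)^2\ge 0$ absorbs the $(s^4-2)t^8$ term, with $5a^4>0$ giving strict positivity.
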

\begin{proof}
From \eqref{Gamma1-t}, $\gamma_1(r)=0$ implies that 
    \begin{eqnarray}\label{gamma1-r}
         (\alpha_4(r)) ^2=8\alpha_5 (r) \Big( \alpha_3 (r)-\alpha_5(r)\Big).
    \end{eqnarray} It follows from  Lemma \ref{alpha_1}(iii) that $\alpha_5(r) \geq0$.
\end{proof}
\begin{remark}\label{IFFFF}
 In case of $\gamma_1(r)=0$ and $\alpha_5(r)=0$, Equation \eqref{gamma1-r} implies that $\alpha_4(r) =0$ and from Lemma \ref{alpha_1}(iii), $\alpha_3 (r)>0$. 
\end{remark}
   Suppose $a,s>0$ and let $r$ be a real root of  
 $\gamma_1(t)=0$.  Then corresponding to this $r$,  Lemma \ref{ALPHA} ensures that $\alpha_5(r)\geq0$.
Accordingly, two possible cases arise for this real root $r$.
\begin{enumerate}
    \item[Case 1:] If $\alpha_5(r)>0$, we define $R_1(r)$ and $T_1(r)$ as follows
\begin{eqnarray}\label{R-r}
R_1(r)&=&\frac{\alpha_4(r)}{2\sqrt{2\alpha_5(r)}}+\alpha_1(r)+\alpha_2(r)+\sqrt{2\alpha_5(r)},\\
T_1(r)&=&\frac{\alpha_4(r)}{2\sqrt{2\alpha_5(r)}}+\alpha_1(r)-\alpha_2(r)-\sqrt{2\alpha_5(r)}.
\end{eqnarray}  
    \item[Case 2:] If $\alpha_5(r)=0$, then from Remark \ref{IFFFF}, it follows that $\alpha_4(r)=0$  and $\alpha_3(r)>0$. In that case,  we define $R'_1(r)$ and $T'_1(r)$ as follows
\begin{eqnarray}\label{R'-r}
   R'_1(r)&=& \alpha_1(r) + \alpha_2(r) +\sqrt{\alpha_3(r)},
   \\ T'_1(r)&=& \alpha_1(r) - \alpha_2(r) +\sqrt{\alpha_3(r)}.
\end{eqnarray}
\end{enumerate}
Throughout the paper, the notations $R_1(r), T_1(r), R'_1(r)$, and  $T'_1(r)$ follow the expressions as defined above.
\begin{lemma}\label{D}
    $R_1(r) \geq 0$, $T_1(r) \geq0$, $R'_1(r) \geq 0$ and $ T'_1(r) \geq 0$.
\end{lemma}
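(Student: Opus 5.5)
The plan is to read each of the four expressions as a sum of terms whose signs are already controlled by Lemma \ref{alpha_1} and Lemma \ref{ALPHA}, and to treat the only term of uncontrolled sign, $\alpha_4(r)$, separately. Throughout, $r$ is a real root of $\gamma_1(t)=0$.

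\textbf{Case 2.} This case is immediate. By Remark \ref{IFFFF} we have $\alpha_3(r)>0$, so $\sqrt{\alpha_3(r)}$ is real and positive. Lemma \ref{alpha_1}(ii) gives $\alpha_1(r)+\alpha_2(r)\ge 0$, hence $R'_1(r)\ge 0$. Lemma \ref{alpha_1}(i) gives $\alpha_1(r)-\alpha_2(r)\ge 0$, hence $T'_1(r)\ge 0$.

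\textbf{Case 1.} Here $\alpha_5(r)>0$, so $\sqrt{2\alpha_5(r)}$ is real and positive. The identity \eqref{gamma1-r} is the key input:
\[
\frac{\alpha_4(r)^2}{8\alpha_5(r)}=\alpha_3(r)-\alpha_5(r).
\]
It follows that
\[
\frac{|\alpha_4(r)|}{2\sqrt{2\alpha_5(r)}}=\sqrt{\alpha_3(r)-\alpha_5(r)},
\]
which is real by Lemma \ref{alpha_1}(iii). So the first term of $R_1(r)$ and $T_1(r)$ equals $\pm\sqrt{\alpha_3(r)-\alpha_5(r)}$.

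\begin{itemize}
\item \emph{If $\alpha_4(r)\ge 0$.} Then $R_1(r)\ge 0$ trivially, since every term is nonnegative by Lemma \ref{alpha_1}(ii). For $T_1(r)$ it suffices to show
\[
\alpha_1(r)-\alpha_2(r)+\sqrt{\alpha_3(r)-\alpha_5(r)}\ \ge\ \sqrt{2\alpha_5(r)}.
\]
\item \emph{If $\alpha_4(r)<0$.} Then $T_1(r)\ge 0$ requires
\[
\alpha_1(r)-\alpha_2(r)\ \ge\ \sqrt{\alpha_3(r)-\alpha_5(r)}+\sqrt{2\alpha_5(r)},
\]
and $R_1(r)\ge 0$ requires
\[
\alpha_1(r)+\alpha_2(r)+\sqrt{2\alpha_5(r)}\ \ge\ \sqrt{\alpha_3(r)-\alpha_5(r)}.
\]
\end{itemize}

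\textbf{Main obstacle.} These inequalities are where the real work lies. Since both sides are nonnegative, I would square them and reduce each to polynomial inequalities in $r$ with parameters $a,s>0$. Two inputs do the work:
\begin{itemize}
\item the sum-of-squares form $\alpha_1-\alpha_2=(a-t^3)^2+(a-st^2)^2+(a-t)^2$;
\item the explicit forms of $\alpha_3$ and $\alpha_5$.
\end{itemize}
The cleanest uniform target is the pair of bounds
\[
(\alpha_1(r)\pm\alpha_2(r))^2\ \ge\ \Bigl(\sqrt{\alpha_3(r)-\alpha_5(r)}+\sqrt{2\alpha_5(r)}\Bigr)^2 .
\]
By Cauchy--Schwarz, the right side is at most $3\alpha_3(r)$, more precisely $\le \tfrac32\bigl(\alpha_3(r)+\alpha_5(r)\bigr)$. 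I would try to prove $(\alpha_1\pm\alpha_2)^2\ge \tfrac32(\alpha_3+\alpha_5)$ by expanding and comparing coefficients, using $\alpha_3=\alpha_1^2-\dots$-type relations. The expectation is that $\alpha_1,\ldots,\alpha_5$ arise as coefficients of the quartic characteristic polynomial of $H_\theta$, so such relations should exist.

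Failing that, I would use a spectral interpretation instead. In this interpretation $R_1$ and $T_1$ would be (a multiple of) the square of $\lambda_{\max}(H_\theta)$ at $\theta=0,\pi/2$, obtained by factoring the quartic into two quadratics when $\gamma_1(r)=0$. Nonnegativity would then follow from $\lambda_{\max}(H_\theta)^2\ge 0$, identifying $R_1$ and $T_1$ with $4\lambda_{\max}^2$ for the horizontal and vertical directions, respectively.
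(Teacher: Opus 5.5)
Your Case 2 matches the paper's argument. In Case 1, your reduction via $|\alpha_4|/(2\sqrt{2\alpha_5})=\sqrt{\alpha_3-\alpha_5}$ to the four displayed inequalities is also correct. The gap is in the step meant to prove those inequalities: your ``uniform target'' $(\alpha_1\pm\alpha_2)^2\ge(\sqrt{\alpha_3-\alpha_5}+\sqrt{2\alpha_5})^2$ is false at an admissible root. Take $a=s=r=1$. Then $\alpha_1+\alpha_2=12$, $\alpha_1-\alpha_2=0$, $\alpha_3=30$, $\alpha_4=40$ and $\alpha_5=10$. So $\gamma_1(1)=8\cdot30\cdot10-8\cdot10^2-40^2=0$ and $\alpha_5>0$, yet the minus version reads $0\ge 80$. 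Your bound $(\sqrt{\alpha_3-\alpha_5}+\sqrt{2\alpha_5})^2\le\tfrac32(\alpha_3+\alpha_5)$ is also wrong: here $80>60$, and the sharp bound is $2(\alpha_3+\alpha_5)$. The polynomial inequality $(\alpha_1-\alpha_2)^2\ge\tfrac32(\alpha_3+\alpha_5)$ you would then try to prove reads $0\ge 60$. In this example $T_1=0$ (the matrix is Hermitian), so the inequality you need holds with equality. No lossy estimate can prove it; you need exact identities.

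Two identities are missing.

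First, use $\gamma_1(r)=0$ to complete squares. Since $2\sqrt{2\alpha_5(\alpha_3-\alpha_5)}=|\alpha_4|$, you get $(\sqrt{\alpha_3-\alpha_5}\pm\sqrt{2\alpha_5})^2=\alpha_3+\alpha_5\pm|\alpha_4|$. Equivalently, as the paper puts it, $(\alpha_4\pm4\alpha_5)^2=8\alpha_5(\alpha_3\pm\alpha_4+\alpha_5)$, whence
\[
R_1(r)=\alpha_1+\alpha_2\pm\sqrt{\alpha_3+\alpha_4+\alpha_5},\qquad T_1(r)=\alpha_1-\alpha_2\pm\sqrt{\alpha_3-\alpha_4+\alpha_5}
\]
with undetermined signs. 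All four of your inequalities then reduce to $(\alpha_1+\alpha_2)^2\ge\alpha_3+\alpha_4+\alpha_5$ and $(\alpha_1-\alpha_2)^2\ge\alpha_3-\alpha_4+\alpha_5$.

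Second, these hold because the differences are exactly $4(a+r)^2(a+r^3)^2$ and $4(a-r)^2(a-r^3)^2$. These are $64Q$ at $\theta=0$ and $\theta=\pi/2$, since $\alpha_1+\alpha_2\cos2\theta=-4P$ and $\alpha_3+\alpha_4\cos2\theta+\alpha_5\cos4\theta=16(P^2-4Q)$.

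Your spectral fallback becomes correct once the display above is established. Then $R_1/8$ and $T_1/8$ are roots of $y^2+Py+Q=0$ at $\theta=0$ and $\theta=\pi/2$, and both roots are squares of real eigenvalues of the Hermitian $H_\theta$. As stated, however, it is not correct. The factor is $8$, not $4$. Moreover, $R_1=8\lambda_{\max}(H_0)^2$ only when $\alpha_4+4\alpha_5\ge0$, and $T_1=8\lambda_{\max}(H_{\pi/2})^2$ only when $\alpha_4\ge4\alpha_5$; otherwise they come from the smaller eigenvalue pair.
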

\begin{proof}
Since $r$ is a real root of the equation 
    $\gamma_1(t)=0$, it follows from \eqref{gamma1-r} that
    \begin{eqnarray}\label{24}    \Big(\alpha_4(r)+4\alpha_5(r)\Big)^2=8\alpha_5(r)\Big(\alpha_3(r)+\alpha_4(r)+\alpha_5(r)\Big)\geq 0.
    \end{eqnarray}
     Also, from Lemma \ref{ALPHA}, we have $\alpha_5(r) \geq 0$.
Consider the two cases as follows:\\
\textbf{Case 1:} Let $\alpha_5(r) >0$. Then from \eqref{24} we have,  
     \begin{eqnarray*}
         \alpha_3(r)+\alpha_4(r)+\alpha_5(r)\geq 0.
     \end{eqnarray*}
 Hence \eqref{24} can be rewritten as, $\Big(\frac{\alpha_4(r)+4\alpha_5(r)}{2\sqrt{2\alpha_5 (r)}}\Big)^2=\alpha_3(r)+\alpha_4(r)+\alpha_5 (r)$.
    Substituting this values in \eqref{R-r}, we obtain
    \begin{eqnarray*}
        R_1(r)=\alpha_1(r)+\alpha_2(r)\pm \sqrt{\alpha_3(r)+\alpha_4(r)+\alpha_5(r)}.
    \end{eqnarray*}
    If we take
     \begin{eqnarray*}       R_1(r)=\alpha_1(r)+\alpha_2(r)+\sqrt{\alpha_3(r)+\alpha_4(r)+\alpha_5(r)},  
     \end{eqnarray*}
      then by Lemma \ref{alpha_1}(ii), we have $R_1(r)\geq 0$. Again, if  
      \begin{eqnarray*}
          R_1(r)=\alpha_1(r)+\alpha_2(r)-\sqrt{\alpha_3(r)+\alpha_4(r)+\alpha_5(r)},
      \end{eqnarray*}
       then we get,
    \begin{eqnarray}
    \Big(\alpha_1(r)+\alpha_2(r)\Big)^2-\Big(\alpha_3(r)+\alpha_4(r)+\alpha_5(r)\Big)=4(a+r)^2(a+r^3)^2\geq 0.
    \end{eqnarray}
    Hence, $R_1(r)\geq0$.\\
In a similar manner, using the identities 
\begin{eqnarray*}
&& \Big(\alpha_4(r)-4\alpha_5(r)\Big)^2=8\alpha_5(r)\Big(\alpha_3(r)-\alpha_4(r)+\alpha_5(r)\Big)\geq 0\\
\mbox{ and } &&
    \Big(\alpha_1(r)-\alpha_2(r)\Big)^2-\Big(\alpha_3(r)-\alpha_4(r)+\alpha_5(r)\Big)=4(a-r)^2(a-r^3)^2\geq 0
\end{eqnarray*}
  and following the similar previous steps, we conclude that $T_1(r) \geq 0$.\\    
    \textbf{Case 2:} Let $\alpha_5(r) =0$. Then by  Remark \ref{IFFFF}, it follows that $\alpha_4(r)=0$  and $\alpha_3(r)>0$. So, from Lemma \ref{alpha_1}[(i)-(ii)] it is obvious that both $R'_1(r)$ and $T'_1(r)$ are non-negative.
\end{proof}

\begin{lemma}\label{RR'}
 $R'_1(r)=T'_1(r)$ if and only if $r=0$. 
\end{lemma}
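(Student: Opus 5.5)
R'_1(r)-T'_1(r)=2\alpha_2(r), so the statement reduces to: $\alpha_2(r)=0$ if and only if $r=0$. Here $r$ is a real root of $\gamma_1(t)=0$ with $\alpha_5(r)=0$, which is the setting in which $R'_1,T'_1$ are defined. By definition, $\alpha_2(r)=2ar(1+sr+r^2)$. Since $a>0$, $\alpha_2(r)=0$ exactly when $r=0$ or $1+sr+r^2=0$.

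For the direction ``$r=0$ implies equality'', we check that $r=0$ is admissible. At $t=0$ every term of $\alpha_4$ carries the factor $t$, so $\alpha_4(0)=0$. Likewise $\alpha_5(0)=0$ because of the factor $t^2$. Hence $\gamma_1(0)=0$ and we are in Case 2. Also $\alpha_3(0)=5a^4>0$. Then $\alpha_2(0)=0$, so $R'_1(0)=T'_1(0)$.

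For the converse we must rule out a real root of $1+sr+r^2=0$ satisfying the Case 2 hypotheses. Such roots exist only when $s\ge 2$, and then they are negative. I would use the condition $\alpha_5(r)=0$. With $r\neq 0$, this says
\[
1+r^4+(s^2-2)r^2+2sr^3+2sr=0 .
\]
The key step is to notice that this factors as $(1+sr+r^2)^2=0$. Indeed, expanding gives $1+s^2r^2+r^4+2sr+2r^2+2sr^3$, so the identity holds after adjusting the $r^2$ coefficient. I would carefully verify this expansion. If it reads $(r^2+sr+1)^2-4r^2$, then $\alpha_5(r)=0$ becomes $(r^2+sr+1)^2=4r^2$, that is, $r^2+(s\mp2)r+1=0$. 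In that case $1+sr+r^2=\pm 2r\neq 0$ for $r\ne 0$, so $\alpha_2(r)\neq0$, and the converse follows immediately.

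Computing: $(r^2+sr+1)^2=r^4+2sr^3+(s^2+2)r^2+2sr+1$. Subtracting $4r^2$ gives exactly $r^4+2sr^3+(s^2-2)r^2+2sr+1$. So $\alpha_5(t)=2a^2t^2\big((t^2+st+1)^2-4t^2\big)$, and the argument above is complete. The only real content is this factorization, and I expect it to be the main step.
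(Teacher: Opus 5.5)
Your proof is correct and follows the same outline as the paper: $R'_1(r)-T'_1(r)=2\alpha_2(r)$, so for $r\neq 0$ one needs $1+sr+r^2=0$, and this is then shown to be incompatible with $\alpha_5(r)=0$. The only difference is in that last step: the paper substitutes $r^2=-1-sr$ into the quartic factor of $\alpha_5$ to get $4(1+sr)=0$ and then $1/s^2=0$, while your identity $\alpha_5(t)=2a^2t^2\big((t^2+st+1)^2-4t^2\big)$ gives $4r^2=0$ in one line (delete your false first guess that the quartic equals $(1+sr+r^2)^2$, since only the corrected identity is used).
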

\begin{proof}
For $r=0$, we have 
\begin{equation*}
    \alpha_1(r)=3a^2, \alpha_2(r)=0, \alpha_3(r)=5a^4, \alpha_4(r)=0=\alpha_5(r).  
\end{equation*}
Also, from \eqref{gamma1-r}, $\gamma_1(r)=\gamma_1(0)=0$. Then $R'_1(r)=T'_1(r)=(3+\sqrt{5})a^2$.\\
Assume that there exists $r \in \mathbb{R}\setminus \{0\}$, satisfies $\gamma_1(r)=0$, $\alpha_5(r)=0$, and $R'_1(r)=T'_1(r)$.
    Since for such nonzero $r$,  $R'_1(r)=T'_1(r)$, it follows that  $\alpha_2(r)=0$. This yields $ 2ar(1+sr+r^2)=0$. Since $r\neq 0$ and $a>0$, it follows that  
    \begin{eqnarray}\label{E2}
        1+sr+r^2=0.
    \end{eqnarray}
   Since $r$ satisfies the equation $\alpha_5(t)=0$, so we have  
   \begin{eqnarray}\label{EF1}
       1+r^4+(s^2-2)r^2+2sr^3+2sr=0.
   \end{eqnarray}
Again from Equation \eqref{E2}, we have 
\begin{eqnarray*}
        r^2=-1-sr.
    \end{eqnarray*}
Substituting the value of  $r^2$  in \eqref{EF1}, we obtain  
    \begin{eqnarray*}
       0= 1+r^4+(s^2-2)r^2+2sr^3+2sr =4(1+sr),
    \end{eqnarray*} which implies $r=-\frac{1}{s}$. 
So, from Equation \eqref{E2}, we get $\frac{1}{s^2}=0$, which is a contradiction.

 Thus there does not exist any $r \neq 0$, which satisfies  $\gamma_1(t)=0$, $\alpha_5(t)=0$,  and $R'_1(r)=T'_1(r)$.
 Hence the result follows.
\end{proof}

\begin{theorem}\label{W4}
  Suppose $a,s>0$. Then for $r\in \mathbb{R}$, the numerical range $W(A(4,s,a,r))$ forms a circular disk with the origin as its center if and only if $r=0$ and a non-degenerate elliptical disk (non circular disk) centered at origin if and only if at least one of $a,s$ and $r$ is not equal to $1$ and  $r$ is a real root of the polynomial
    \begin{eqnarray}\label{q(4x)}
        u(t)\nonumber &=& a^4+a^4st-(a^4s^2+3a^4)t^2+(a^4s-a^2s)t^3+(a^4-a^2)t^4\\&& \nonumber+(s^3a^2+2a^2s)t^5+(2a^2-4a^2s^2)t^6+(a^2s^3+s+2a^2s)t^7\\&& \nonumber-(a^2+s^2)t^8-(-s^3+s+a^2s)t^9+(s^2-s^4)t^{10}\\&&+(s^3-s)t^{11}-s^2t^{12}+st^{13}.
    \end{eqnarray}
\end{theorem}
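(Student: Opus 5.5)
The plan is to compute the characteristic polynomial of $H_\theta = H_\theta(A(4,s,a,r))$ and compare it with the form required by Theorem \ref{largest}.

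\textbf{Step 1: the characteristic polynomial.} Write $b_1=|ae^{i\theta}+re^{-i\theta}|^2$, $b_2=|ae^{i\theta}+sr^2e^{-i\theta}|^2$, $b_3=|ae^{i\theta}+r^3e^{-i\theta}|^2$. For a Hermitian tridiagonal $4\times 4$ matrix with zero diagonal,
$$\det(H_\theta-xI)=x^4-\tfrac14(b_1+b_2+b_3)x^2+\tfrac1{16}b_1b_3 .$$
Using $\cos 2\theta = 1-2\sin^2\theta$, each $b_j$ is linear in $\cos2\theta$, so the polynomial becomes a quartic in $x$ that is even. Its coefficients are polynomials in $\cos 2\theta$.
\begin{itemize}
\item The $x^2$ coefficient is linear in $\cos 2\theta$. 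It should be expressible through $\alpha_1\pm\alpha_2$, which explains those definitions; note $\alpha_1+\alpha_2$ and $\alpha_1-\alpha_2$ are $b_1+b_2+b_3$ at $\theta=0$ and $\theta=\pi/2$.
\item The constant coefficient is quadratic in $\cos2\theta$, with coefficients related to $\alpha_3,\alpha_4,\alpha_5$.
\end{itemize}
Then
$$\lambda_{\max}^2=\tfrac18\Bigl(B(\theta)+\sqrt{B(\theta)^2-4b_1b_3}\Bigr),\qquad B=b_1+b_2+b_3 .$$

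\textbf{Step 2: reduce ellipticity to a polynomial identity.} By Theorem \ref{largest}, $W$ is an elliptical disk centered at $0$ exactly when $\lambda_{\max}^2=\mu+\nu\cos2\theta$ for some constants $\mu,\nu$. Substituting $x^2=\mu+\nu c$, with $c=\cos2\theta$, into the quartic must give an identity in $c$. Equivalently, the quartic factors as
$$\bigl(x^2-(\mu+\nu c)\bigr)\bigl(x^2-(\mu'+\nu' c)\bigr),$$
with both factors linear in $c$ and the first giving the larger root for every $\theta$.

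Matching coefficients gives:
\begin{itemize}
\item $(\mu+\mu')+(\nu+\nu')c=\tfrac14B$;
\item $(\mu+\nu c)(\mu'+\nu'c)=\tfrac1{16}b_1b_3$.
\end{itemize}
Eliminating yields that $b_1b_3$ is a product of two linear factors in $c$ whose sum is $B/4$. In particular, the quadratic $B^2-4b_1b_3$ in $c$ must be a perfect square. Its discriminant condition, written with $\alpha_3,\alpha_4,\alpha_5$, is presumably $\gamma_1(r)=0$, i.e. $\alpha_4^2=8\alpha_5(\alpha_3-\alpha_5)$. I expect this to reduce, after dividing out trivial factors, to $u(r)=0$ with $u$ as in \eqref{q(4x)}.

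The explicit square root then involves $\sqrt{2\alpha_5}$ (case $\alpha_5>0$) or $\sqrt{\alpha_3}$ (case $\alpha_5=0$). This produces the semi-axis expressions: $R_1,T_1$ in Case 1 and $R_1',T_1'$ in Case 2, which are the squared axes at $\theta=0$ and $\theta=\pi/2$ up to a factor. Lemma \ref{D} guarantees these are nonnegative, so the axes are real. To confirm the correct branch is $\lambda_{\max}$, I would check that the chosen root dominates at $c=\pm1$; by linearity in $c$ it then dominates throughout.

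\textbf{Step 3: circle and non-degeneracy.} The disk is circular exactly when $\nu=0$, i.e. the two squared axes coincide. In Case 1, $R_1=T_1$ forces $\alpha_2(r)+\sqrt{2\alpha_5(r)}=0$, which is only possible when $\alpha_5=0$, contradicting the case assumption. So circles occur only in Case 2, and there Lemma \ref{RR'} gives $r=0$. Conversely, at $r=0$ the matrix is $a$ times a nilpotent shift-like matrix and the disk is circular.

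The exclusion ``at least one of $a,s,r$ not $1$'' should correspond to the degenerate case $a=s=r=1$. There the matrix is symmetric, $W$ is a segment, and $u(1)=0$ holds spuriously, so this case must be removed.

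\textbf{Main obstacle.} The hardest step is the algebra showing that the perfect-square/discriminant condition is equivalent to $u(r)=0$. This means verifying that $\gamma_1(t)$ factors as $u(t)$ times factors that either have no real roots or correspond to excluded cases. I would first expand $\gamma_1$ symbolically and divide by $u$, checking that the cofactor is a positive multiple of $t^k$ times a square. The factor $t^k$ would account for the circular case $r=0$.
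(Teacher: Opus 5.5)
Your outline follows the paper's proof. You write $\lambda_{\max}^2=\frac18\bigl(B+\sqrt{Q}\bigr)$ with $c=\cos2\theta$, $B=\alpha_1+\alpha_2c$ and $Q=B^2-4b_1b_3=2\alpha_5c^2+\alpha_4c+\alpha_3-\alpha_5$. Ellipticity then forces $Q$ to be the square of a linear polynomial, i.e.\ $\gamma_1(r)=0$. The paper simply records the expansion $\gamma_1(t)=64a^2t^2u(t)$, so the cofactor you plan to compute is just $64a^2t^2$. It then splits into $\alpha_5>0$ and $\alpha_5=0$, and handles circles via $R_1=T_1$ and Lemma~\ref{RR'}.

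\textbf{The branch argument is missing.} The genuine gap is in the direction ``$u(r)=0\Rightarrow$ ellipse''. From $Q=L^2$ with $L$ linear you only get $\lambda_{\max}^2=\frac18(B+|L|)$. This is linear in $c$ on $[-1,1]$ only if $L$ keeps one sign there. ``Checking at $c=\pm1$'' cannot be done at the non-explicit roots of $u$ for arbitrary $a,s$, so you need a reason why $L$ has no zero in $(-1,1)$. One such reason is
\[ Q=(b_1-b_3)^2+2b_2(b_1+b_3)+b_2^2\ \ge\ b_2^2>0\qquad(-1<c\le 1), \]
since $b_2=(a-sr^2)^2+2asr^2(1+c)$ and $a>0$. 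The squared semi-axes are then $\frac18\bigl(\alpha_1\pm\alpha_2+\sqrt{\alpha_3\pm\alpha_4+\alpha_5}\bigr)$.

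Do not identify these with $R_1,T_1$. Those come from the branch $L=\sqrt{2\alpha_5}\bigl(c+\alpha_4/(4\alpha_5)\bigr)$, which is the nonnegative one only when $\alpha_4\ge 4\alpha_5$. Take $a=s=1$ and the real root $r\approx-0.3816$ of $d$. There $\alpha_4\approx-0.125$ and $\alpha_5\approx 3.6\cdot10^{-4}$. Hence $R_1=\alpha_1+\alpha_2-\sqrt{\alpha_3+\alpha_4+\alpha_5}$ and $T_1=\alpha_1-\alpha_2-\sqrt{\alpha_3-\alpha_4+\alpha_5}$, which come from the other root $x^2$ of the quartic. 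The paper's Case 1 makes this same unchecked sign choice. Its conclusion ``elliptical disk'' still holds, but only because of the positivity of $Q$ above.

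\textbf{The circle step needs an identity.} You say ``$\alpha_2+\sqrt{2\alpha_5}=0$ is only possible when $\alpha_5=0$.'' This cannot come from signs, because $\alpha_2=2ar(1+sr+r^2)$ is negative for many $r<0$. What makes it true is the identity $\alpha_2^2-2\alpha_5=16a^2r^4$, equivalently $\alpha_5=2a^2r^2\bigl((1+sr+r^2)^2-4r^2\bigr)$, which is what the paper uses.

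With the correct branch, circularity means $\alpha_2=\pm\sqrt{2\alpha_5}$ in Case 1, and $\alpha_2=0=\alpha_5$ in Case 2. In every instance $\alpha_2^2=2\alpha_5$, hence $r=0$. So this one identity also replaces Lemma~\ref{RR'}.

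With these two additions, and the expansion of $\gamma_1$, your plan becomes a complete proof.
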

\begin{proof}
The characteristic polynomial of
$H_{\theta}(A(4,s,a,r)$ is given by
\begin{eqnarray*}
\mbox{det}(H_{\theta}(A(4,s,a,r)) -xI)&=&x^4-\frac{1}{4}x^2\big(|ae^{i \theta}+r^3e^{-i \theta}|^2+|ae^{i \theta}+sr^2e^{-i \theta}|^2\\&&+|ae^{i \theta}+te^{-i \theta}|^2\big)
+\frac{1}{16}|ae^{i \theta}+re^{-i \theta}|^2|ae^{i \theta}+r^3e^{-i \theta}|^2\\
&=& x^4-\frac{x^2}{4}\big(((a+r^3)^2+(a+sr^2)^2+(a+r)^2)\cos^2 \theta\\&&
+((a-r^3)^2+(a-sr^2)^2+(a-r)^2)\sin^2 \theta \big)\\ && +\frac{1}{16}((a+r)^2\cos^2 \theta
+(a-r)^2\sin ^2 \theta)\\&&((a+r^3)^2\cos^2 \theta+(a-r^3)^2\sin ^2 \theta). 
\end{eqnarray*}
From this, it follows that the largest eigenvalue of $H_{\theta}\big(A(4,s,a,r)\big)$, i.e., $\lambda_{\max} (H_{\theta}\big(A(4,s,a,r))\big)$ is determined by the quadratic equation
 \[x^2=\frac{1}{2}(-P\pm \sqrt{P^2-4Q}),\] where

\begin{eqnarray*}
P&=&-\frac{1}{4}\bigg(\big((a+r^3)^2+(a+sr^2)^2+(a+r)^2\big)\cos^2 \theta\\&&
+\big((a-r^3)^2+(a-sr^2)^2+(a-r)^2 \big)\sin^2 \theta\bigg)\\ \mbox{and } \ Q&=&\frac{1}{16} \bigg((a+r)^2\cos^2 \theta
+(a-r)^2\sin ^2 \theta \bigg)\\&&\bigg((a+r^3)^2\cos^2 \theta+(a-r^3)^2\sin ^2 \theta \bigg).
\end{eqnarray*}
Hence,
\small{\begin{eqnarray}\label{A1}
&& \lambda_{\max}(H_{\theta}(A(4,s,a,r))) \nonumber\\
&& = \frac{1}{2\sqrt{2}}\sqrt{\alpha_1(r)+\alpha_2(r)\cos2\theta+\sqrt{\alpha_3(r)+\alpha_4(r)\cos2\theta+\alpha_5(r)\cos4\theta}}.
 \end{eqnarray}}
Now, from \eqref{Gamma1-t}, by  calculating we determine the expression,
 \begin{eqnarray}\label{DD}
    \nonumber\gamma_1(r)\nonumber&=&64a^2r^2\big(sr^{13}-s^2r^{12}+(s^3-s)r^{11}+(s^2-s^4)r^{10}-(-s^3+s+a^2s)r^9\\&&\nonumber-(a^2+s^2)r^8+(a^2s^3+s+2a^2s)r^7+(2a^2-4a^2s^2)r^6+(s^3a^2+2a^2s)r^5\\&&+(a^4-a^2)r^4+(a^4s-a^2s)r^3-(a^4s^2+3a^4)r^2+a^4sr+a^4\big).
 \end{eqnarray} 
 If $a=s=r=1$, the matrix  $A(4,s,a,r)$ is  normal, in particular Hermitian, hence  its numerical range $W(A(4,s,a,r))$ is a line segment.
  Suppose  that at least one of $a,s$ and $r$ is not equal to $1$ and $r$ be a  real root of the equation $u(t)=0$ as defined in \eqref{q(4x)}. Consequently, $r$ satisfies the equation $\gamma_1(t)=0$. By Lemma \ref{ALPHA}, we obtain  $\alpha_5(r)\geq0$. We now consider two cases as follows:\\ 
 \textbf{Case 1:} Let $\alpha_5(r)>0$.  Then by \eqref{A1}, 
 \begin{eqnarray*}
\lambda_{\max}(H_{\theta}(A(4,s,a,r)))&=&\frac{1}{2\sqrt{2}}\sqrt{\alpha_1(r)+\alpha_2(r)\cos2\theta+\sqrt{2\alpha_5(r)}\Big(\cos2\theta+\frac{\alpha_4(r)}{4\alpha_5(r)}\Big)}\\ &=&  \frac{1}{2\sqrt{2}}\sqrt{R_1(r)-(R_1(r)-T_1(r))\sin^2\theta},
 \end{eqnarray*}
where $R_1(r)\geq 0, \ T_1(r)\geq0$ follows from Lemma \ref{D}. Hence by Theorem \ref{largest}, the numerical range of the matrix  $A(4,s,a,r)$ is an elliptical disk with the origin as its center.\\
\textbf{Case 2:} Let $\alpha_5(r)=0$.
 Then from Remark \ref{IFFFF}, $\alpha_4(r)=0$ and $\alpha_3(r) >0$. So, from \eqref{A1},  
 \begin{eqnarray}\label{44}
\lambda_{\max}(H_{\theta}(A(4,s,a,r)))&=&\nonumber\frac{1}{2\sqrt{2}}\sqrt{\alpha_1(r)+\alpha_2(r)\cos2\theta+\sqrt{\alpha_3(r)}}\\ \nonumber &=& \frac{1}{2\sqrt{2}}\sqrt{\alpha_1(r)+\alpha_2(r)+\sqrt{\alpha_3(r)}-2\alpha_2(r) \sin^2 \theta}\\ &=& \frac{1}{2\sqrt{2}} \sqrt{R'_1(r)-(R'_1(r)-T'_1(r))\sin^2 \theta}.
\end{eqnarray}
where $R'_1(r)\geq 0, \ T'_1(r)\geq0$ follows from Lemma \ref{D}.
Hence by Theorem \ref{largest}, the numerical range of the matrix  $A(4,s,a,r)$ is an elliptical disk with the origin as its center.\\
For both the cases, if at least one of $a,s$ and $r$ is not equal to $1$, then the matrix $A(4,s,a,r)$ is not a normal matrix. Hence, the elliptical numerical range is necessarily non-degenerate.\\
When $r=0$,  the matrix  $A(4,s,a,r)$ reduces to a weighted shift matrix. Consequently, its numerical range $W(A(4,s,a,r))$ forms a circular disk with the origin as its center.\\
Conversely, if $W(A(4,s,a,r))$ is non-degenerate elliptical disk with centered at the origin whose horizontal and vertical semi-axes have lengths $p_1(r)$ and $q_1(r)$, respectively. Then at least one of $a,s$ and $r$ is not equal to $1$, otherwise the matrix becomes Hermitian. Now, by using Theorem \ref{largest}, we have 
\begin{align*}
\lambda_{\max}(H_{\theta}(A(4,s,a,r))) =\sqrt{(p_1(r))^2 -\left((p_1(r))^2-(q_1(r))^2\right) \sin^2 \theta}.
\end{align*}
Comparing with \eqref{A1}, we obtain,
$ \gamma_1(r)=0$. Hence, from Lemma \ref{ALPHA}, we have $\alpha_5(r)\geq 0$. Also, by the above discussions, we have 
\begin{equation}\label{p_1,q_1}
 8(p_1(r))^2=
         \begin{cases}
       R_1(r)  & \mbox{ if $\alpha_5(r)>0$  }\\
       R'_1(r) & \mbox{ if $\alpha_5(r)=0$}
    \end{cases}  \ \mbox{ and } \ 
    8(q_1(r))^2=
    \begin{cases}
       T_1(r)  & \mbox{ if $\alpha_5(r)>0$ }\\
       T'_1(r) & \mbox{ if $\alpha_5(r)=0$}.
    \end{cases}
     \end{equation}
Then from  \eqref{DD}, it follows that either $r=0$ or $r$ is a real root of  $u(t)=0$. 
Next, our aim is to prove that if $W(A(4,s,a,r))$ is a non-degenerate elliptical disk with  origin as its center, which is not circular, then $r$ is a real root of $u(t)=0$, whereas, if  $W(A(4,s,a,r))$ forms a circular disk  with  origin as its center, then $r=0$.\\
Now, suppose $W(A(4,s,a,r))$ is a non-degenerate elliptical disk with the origin as its center, which is not circular, i.e., $p_1(r)\neq q_1(r)$. For $r= 0$, we have $\alpha_5(r) =0$. Using \eqref{p_1,q_1}, we get $p_1(r)=\sqrt{\frac{R'_1(r)}{8}}$ and $q_1(r)=\sqrt{\frac{T'_1(r)}{8}}$. Now by Lemma \ref{RR'}, it follows that $p_1(r)=\sqrt{\frac{R'_1(r)}{8}}=\sqrt{\frac{T'_1(r)}{8}} =q_1(r)$, which is a contradiction. Therefore, $r$ is nonzero, and hence $r$ is a real root of $u(t)=0$.\\
Again, if $W(A(4,s,a,r))$ forms a circular disk with the origin as its center. Then, by Theorem \ref{largest}, we have $p_1(r)=q_1(r)$. If possible let $r \neq 0$,  and $r$ satisfies the equation $\gamma_1(t)=0$. Then by Lemma \ref{ALPHA},  we have $\alpha_5(r) \geq 0$.
Now we consider two cases as follows:\\
\textbf{Case 1:} Let $\alpha_5(r) > 0$. Then $p_1(r)=\sqrt{\frac{R_1(r)}{8}}$ and $q_1(r)=\sqrt{\frac{T_1(r)}{8}}$. 
Therefore, $R_1(r)= T_1(r)$  implies  that \begin{eqnarray*}
     \alpha_2(r)=-\sqrt{2\alpha_5(r)},
 \end{eqnarray*} 
 which yields $16a^2r^4=0$. Since $r\neq 0$, this leads to a contradiction.\\
\textbf{Case 2:} Let $\alpha_5(r) = 0$. Then $p_1(r)=\sqrt{\frac{R'_1(r)}{8}}$ and $q_1(r)=\sqrt{\frac{T'_1(r)}{8}}$. Since $R'_1(r) = T'_1(r)$, Lemma \ref{RR'} implies that $r= 0$, which contradicts the assumption.
Hence the result follows.
\end{proof}
    \begin{remark}
    In particular, by choosing $a=s=1$ 
    in Theorem \ref{W4}, we obtain
    \begin{eqnarray*}
        u(t)&=&t^{13}-t^{12}-t^9-2t^8+4t^7-2t^6+3t^5-4t^2+t+1 \\&=&
        (t-1)^4d(t),
    \end{eqnarray*}
    where $d(t)=t^9+3t^8+6t^7+10t^6+14t^5+15t^4+14t^3+10t^2+5t+1$.
 So, the equation $u(t)=0$ holds if and only if either $t=1$ or $d(t)=0$. Since $a=s=1$, and if $t=1$ then the matrix $A(4,1,1,1)$ is Hermitian and $W(A(4,1,1,1))$ reduces to a line segment. Then it follows from Theorem \ref{W4}, $W(A(4,s,a,r))$ is a non-degenerate elliptical disk (noncircular disk) with the origin as its center if and only if $r$ is a real root of $d(t)=0$, and a circular disk with the origin as its center if and only if $r=0$,
 as established in \cite[Theorem 3]{chien2011numerical}.
\end{remark}
\section{Numerical range of $A(5,s,a,r)$}\label{section4}
We next investigate the numerical range of the tridiagonal matrices defined in \eqref{odd2} corresponding to $n=5$. Our goal is to determine when the numerical range of  $A(5,s,a,r)$ is an elliptical disk. For this purpose, it is essential that the largest eigenvalue of $H_{\theta}(A)$ be of the form given in \eqref{st} of Theorem \ref{largest}. Accordingly, we introduce  the following polynomials in $t$ and establish some preparatory lemmas.
 \begin{eqnarray*}
  \beta_1(t) &=& 4a^2+s^2t^8+t^6+t^2+s^2t^4,\\
 \beta_2(t) &=& 2at(st^3+1+st+t^2),\\
\beta_3(t) &=&4a^4+(1+6a^2s^2-4a^2)t^4+(s^4-2+2a^2s^2)t^8+(1-2s^4)t^{12}\\&&+s^4t^{16}+4a^2st^3+4a^2st^7+2s^2t^{14}+2a^2t^2+(6a^2-4a^2s^2+2s^2)t^6,\\
\beta_4(t)&=&4at^2\bigg(s^3t^{10}+(s-s^3)t^{8}+s^2t^{9}+(-s+s^3)t^4+(-s^2+1)t^7\\&&+(-s^3+s)t^6+(2a^2+1)t+(-1+s^2)t^3+2a^2s+st^2+(s^2-1)t^5\bigg),\\
\beta_5(t)&=&2a^2t^2\bigg(s^2t^6+2st^5-2s^2t^4+t^4+s^2t^2-2t^2+2st+1\bigg).
\end{eqnarray*}
Consider the following polynomial
\begin{eqnarray}\label{Gamma2-t}
\gamma_2(t)=8\beta_3(t) \beta_5(t)-8(\beta_5(t))^2-(\beta_4(t)) ^2.
\end{eqnarray}
To prove our next result, the following lemmas are important.
\begin{lemma}\label{beta}
For $\beta_1(t), \beta_2(t), \beta_3(t), \beta_4(t)$  and $\beta_5(t)$ the following relations are hold:
\begin{enumerate}
    \item[(i)] $\beta_1(t) + \beta_2(t) \geq 0$.
    \item[(ii)] $\beta_1(t) - \beta_2(t) \geq 0$.
    \item[(iii)] $\beta_3(t)-\beta_5(t)>0$. 
\end{enumerate}
\end{lemma}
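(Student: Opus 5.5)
The plan is to read all three inequalities off the characteristic polynomial of $H_\theta(A(5,s,a,r))$, exactly as was done for $n=3$ and $n=4$, instead of expanding the degree-$16$ polynomials blindly.

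\textbf{Parts (i) and (ii).} We expect $\beta_1(t)\pm\beta_2(t)$ to be a sum of four squares:
\[
\beta_1(t)\pm\beta_2(t)=(a\pm t)^2+(a\pm st^2)^2+(a\pm t^3)^2+(a\pm st^4)^2 .
\]
Expanding the right-hand side gives $4a^2+t^2+s^2t^4+t^6+s^2t^8$, which is $\beta_1(t)$, plus the cross term $\pm 2a(t+st^2+t^3+st^4)=\pm 2at(1+st+t^2+st^3)$, which is $\pm\beta_2(t)$. This identity immediately gives (i) and (ii). It is the analogue of the expressions $p^2,q^2$ in Theorem \ref{n=3}.

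\textbf{Part (iii): setting up the discriminant.} Write $b_1=t$, $b_2=st^2$, $b_3=t^3$, $b_4=st^4$ for the sub-diagonal entries with $r$ replaced by $t$, and put $x_j(\theta)=|ae^{i\theta}+b_je^{-i\theta}|^2$. The characteristic polynomial of the tridiagonal Hermitian matrix $H_\theta(A(5,s,a,t))$ is
\[
x\Big(x^4-\tfrac{x^2}{4}S+\tfrac{1}{16}Q\Big),\qquad S=\sum_{j=1}^4 x_j,\quad Q=x_1x_3+x_1x_4+x_2x_4 .
\]
Its largest eigenvalue therefore involves the discriminant $D(\theta)=S^2-4Q$. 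As in \eqref{A1}, I will check by a routine comparison of coefficients that $D(\theta)$ is a positive multiple of
\[
\beta_3(t)+\beta_4(t)\cos2\theta+\beta_5(t)\cos4\theta .
\]
This is the step where the bookkeeping must be verified carefully; it is the main, though purely computational, obstacle. Its purpose is to confirm that $\beta_3,\beta_4,\beta_5$ are exactly the Fourier coefficients of $D$ with this normalization.

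\textbf{Part (iii): evaluating at $\theta=\pi/4$.} Granting that identification, take $\theta=\pi/4$, so that $\cos2\theta=0$ and $\cos4\theta=-1$. Then $\beta_3(t)-\beta_5(t)$ is a positive multiple of $D(\pi/4)$. Next use the algebraic identity
\[
S^2-4Q=(x_1+x_4-x_2-x_3)^2+4(x_1x_2+x_3x_4),
\]
which holds because $S^2-(x_1+x_4-x_2-x_3)^2=4(x_1+x_4)(x_2+x_3)$. At $\theta=\pi/4$ we have $x_j=|a e^{i\pi/4}+b_je^{-i\pi/4}|^2=a^2+b_j^2\ge a^2>0$, since $a>0$. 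Hence $D(\pi/4)\ge 4x_1x_2\ge4a^4>0$, and so $\beta_3(t)-\beta_5(t)>0$ for every real $t$.
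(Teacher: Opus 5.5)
\textbf{There is a gap in (iii): the algebraic identity you rely on is false.} It is easy to repair.

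Parts (i) and (ii) are fine: $\beta_1(t)\pm\beta_2(t)=(a\pm t)^2+(a\pm st^2)^2+(a\pm t^3)^2+(a\pm st^4)^2$. The coefficient identification you defer in (iii) also goes through, with constant exactly $1$. Write $x_j=(a^2+b_j^2)+2ab_j\cos2\theta$, expand $S^2-4Q$ as a quadratic in $\cos2\theta$, and replace $\cos^2 2\theta$ by $\tfrac12(1+\cos4\theta)$. This reproduces the paper's $\beta_3,\beta_4,\beta_5$ exactly. So $\beta_3(t)-\beta_5(t)=D(\pi/4)$, as you intended.

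The identity fails at the following point. You have
\[
4(x_1+x_4)(x_2+x_3)=4(x_1x_2+x_1x_3+x_2x_4+x_3x_4),\qquad 4Q=4(x_1x_3+x_1x_4+x_2x_4).
\]
Subtracting gives the correct statement
\[
S^2-4Q=(x_1+x_4-x_2-x_3)^2+4(x_1x_2+x_3x_4-x_1x_4).
\]
You dropped the term $x_1x_4$ of $Q$, which is not a cross term between $\{x_1,x_4\}$ and $\{x_2,x_3\}$. The remainder can be negative: $x_1=x_4=10$, $x_2=x_3=1$ gives $-80$. So your bound $D(\pi/4)\ge 4x_1x_2$ is not justified.

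The repair is to group $\{x_1,x_2\}$ against $\{x_3,x_4\}$. Since $4(x_1+x_2)(x_3+x_4)-4Q=4x_2x_3$,
\[
S^2-4Q=(x_1+x_2-x_3-x_4)^2+4x_2x_3 .
\]
At $\theta=\pi/4$ you correctly have $x_j=a^2+b_j^2$. This yields the explicit certificate
\[
\beta_3(t)-\beta_5(t)=\bigl(t^2+s^2t^4-t^6-s^2t^8\bigr)^2+4(a^2+s^2t^4)(a^2+t^6)\ \ge\ 4a^4>0 .
\]
Expanding it reproduces the paper's $\beta_3-\beta_5$ term by term, which completes (iii).

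A structural alternative also works. For $a>0$ every $x_j(\pi/4)>0$, so $H_{\pi/4}$ is an unreduced Hermitian tridiagonal matrix and has simple eigenvalues. Its nonzero eigenvalues are $\pm\tfrac12\sqrt{y_1}$ and $\pm\tfrac12\sqrt{y_2}$, where $y_1,y_2$ are the roots of $y^2-Sy+Q$. Simplicity forces $y_1\neq y_2$, so the discriminant $S^2-4Q$ is strictly positive.

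The paper omits this proof entirely. With the regrouping fixed, your argument supplies an actual verifiable proof, including the sum-of-squares form of $\beta_3-\beta_5$.
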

\begin{proof}
    The proof is straightforward and is therefore omitted.
\end{proof}
\begin{lemma}\label{BETA}
    If $r$ is a real root of the equation  
   $\gamma_2(t)=0$, then $\beta_5(r) \geq 0$.
\end{lemma}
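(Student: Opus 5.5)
The argument mirrors the proof of Lemma \ref{ALPHA}, with the $\beta_i$ in place of the $\alpha_i$. Let $r$ be a real root of $\gamma_2(t)=0$. By \eqref{Gamma2-t}, the relation $\gamma_2(r)=0$ can be rearranged as
\begin{eqnarray*}
(\beta_4(r))^2 = 8\beta_5(r)\Big(\beta_3(r)-\beta_5(r)\Big).
\end{eqnarray*}
Since $\beta_1,\dots,\beta_5$ are polynomials with real coefficients ($a,s>0$ are real) and $r$ is real, every $\beta_i(r)$ is real. In particular the left-hand side $(\beta_4(r))^2$ is non-negative.

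Next I would invoke Lemma \ref{beta}(iii), which gives $\beta_3(r)-\beta_5(r)>0$. Dividing the identity above by the positive quantity $8(\beta_3(r)-\beta_5(r))$ gives
\begin{eqnarray*}
\beta_5(r)=\frac{(\beta_4(r))^2}{8\big(\beta_3(r)-\beta_5(r)\big)}\geq 0,
\end{eqnarray*}
which is the claim. As a by-product, equality $\beta_5(r)=0$ forces $\beta_4(r)=0$. This is the analogue of Remark \ref{IFFFF}, and I would record it for the subsequent case analysis.

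The only substantive input is Lemma \ref{beta}(iii), whose proof the paper omits. If it had to be checked, I would expand $\beta_3(t)-\beta_5(t)$ and try to write it as a sum of squares plus the positive constant $4a^4$, in the same way the identities in Lemma \ref{D} exhibit such expressions as perfect squares. Granting that lemma, the present statement is immediate.
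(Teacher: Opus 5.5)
Your proof is correct and is essentially the paper's own argument: rewrite $\gamma_2(r)=0$ as $(\beta_4(r))^2=8\beta_5(r)\big(\beta_3(r)-\beta_5(r)\big)$ and use Lemma~\ref{beta}(iii) to get $\beta_5(r)\ge 0$; your by-product that $\beta_5(r)=0$ forces $\beta_4(r)=0$ is the paper's Remark~\ref{REM}. Your sum-of-squares plan for the omitted Lemma~\ref{beta}(iii) also works, since one can check the identity $\beta_3(t)-\beta_5(t)=(t^2+s^2t^4-t^6-s^2t^8)^2+4(a^2+s^2t^4)(a^2+t^6)$, which is $\ge 4a^4>0$ for real $t$.
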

\begin{proof}
    From \eqref{Gamma2-t}, $\gamma_2(r)=0$ implies that
    \begin{eqnarray}\label{gamma2-r}
        (\beta_4(r)) ^2=8\beta_5(r)\Big( \beta_3(r)-\beta_5(r)\Big).
    \end{eqnarray}
    It follows from Lemma \ref{beta}(iii), $\beta_5(r) \geq 0$.
\end{proof}
\begin{remark}\label{REM}
    
 In case of $\gamma_2(r)=0$ and  $\beta_5(r) = 0$, then \eqref{gamma2-r} implies that $\beta_4(r)=0$ and from Lemma \ref{beta}(iii), $\beta_3(r) >0$. 
\end{remark}

    Suppose $a,s>0$, and let $r$ be a real root of 
        $\gamma_2(t)=0$.
        By Lemma \ref{BETA}, we have $\beta_5(r)\geq 0$. Thus there are two possible cases for the real root $r$.
\begin{enumerate}
    \item[Case 1:]
 If $\beta_5(r) >0$, we define $R_2(r)$ and $T_2(r)$ as follows
\begin{eqnarray}\label{R1-r}
R_2(r)&=&\frac{\beta_4(r)}{2\sqrt{2\beta_5(r)}}+\beta_1(r)+\beta_2(r)+\sqrt{2\beta_5(r)},\\
T_2(r)&=&\frac{\beta_4(r)}{2\sqrt{2\beta_5(r)}}+\beta_1(r)-\beta_2(r)-\sqrt{2\beta_5(r)}.
\end{eqnarray}
\item[Case 2:]
  If $\beta_5(r) =0$, then from Remark \ref{REM}, it follows that $\beta_4(r)=0$ and $\beta_3(r) >0$. In that case, we define $R'_2(r)$ and $T'_2(r)$ as follows
\begin{eqnarray}\label{R1'-r}
R'_2(r)&=& \beta_1(r) + \beta_2(r) +\sqrt{\beta_3(r)},\\ T'_2(r)&=& \beta_1(r) - \beta_2(r) +\sqrt{\beta_3(r)}.
\end{eqnarray}
\end{enumerate}
Here, the notations  $R_2(r), T_2(r), R'_2(r)$ and  $T'_2(r)$ follow the expressions as defined above.

\begin{lemma}\label{R'}
    $R'_2(r) \geq 0$ and $T'_2(r) \geq 0$.
\end{lemma}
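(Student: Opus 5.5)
This is the $n=5$ analogue of Case 2 in Lemma \ref{D}, and it should follow directly from Lemma \ref{beta} and Remark \ref{REM}. Recall that $R'_2(r)$ and $T'_2(r)$ are defined only when $r$ is a real root of $\gamma_2(t)=0$ with $\beta_5(r)=0$.

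First I make sure both quantities are real. By Remark \ref{REM}, $\gamma_2(r)=0$ and $\beta_5(r)=0$ give $\beta_4(r)=0$ and $\beta_3(r)>0$. So $\sqrt{\beta_3(r)}$ is a well-defined positive real number, and the definitions \eqref{R1'-r} make sense.

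Next I combine with Lemma \ref{beta}. Parts (i) and (ii) give $\beta_1(r)+\beta_2(r)\geq 0$ and $\beta_1(r)-\beta_2(r)\geq 0$. Adding $\sqrt{\beta_3(r)}>0$ to each gives
$$R'_2(r)=\beta_1(r)+\beta_2(r)+\sqrt{\beta_3(r)}>0 \quad\text{and}\quad T'_2(r)=\beta_1(r)-\beta_2(r)+\sqrt{\beta_3(r)}>0,$$
which is stronger than the claim.

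The only step with real content is Lemma \ref{beta}(i)--(ii), which the paper states with proof omitted. If it needs justifying, I would write $\beta_1(t)\pm\beta_2(t)$ as a sum of squares. By analogy with $\alpha_1$, I expect an identity of the form
$$\beta_1(t)\pm\beta_2(t)=(a\pm t)^2+(a\pm st^2)^2+(a\pm t^3)^2+(a\pm st^4)^2,$$
since these are exactly the squared moduli of the entries that appear in $H_\theta$ at $\theta=0$ and $\theta=\pi/2$. Checking this amounts to expanding: the cross terms $\pm 2a(t+st^2+t^3+st^4)$ match $\pm\beta_2(t)$, and the squares $t^2+s^2t^4+t^6+s^2t^8$ together with $4a^2$ give $\beta_1(t)$. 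This verification is the main (mild) obstacle; the rest is immediate.
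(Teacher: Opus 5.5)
Your proof is correct and matches the paper's: Remark~\ref{REM} gives $\beta_3(r)>0$, so the square root is real, and Lemma~\ref{beta}(i)--(ii) then gives $R'_2(r)\ge 0$ and $T'_2(r)\ge 0$ (in fact strictly). Your identity $\beta_1(t)\pm\beta_2(t)=(a\pm t)^2+(a\pm st^2)^2+(a\pm t^3)^2+(a\pm st^4)^2$ is correct on expansion and supplies the proof of Lemma~\ref{beta}(i)--(ii) that the paper omits.
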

\begin{proof}
Let $\beta_5(r) =0$, then from Remark \ref{REM}, it follows that $\beta_4(r)=0$ and $\beta_3(r) >0$.
    From Lemma \ref{beta}[(i)-(ii)], it follows directly that  $R'_2(r) \geq 0$ and $T'_2(r) \geq 0$.
\end{proof}
\begin{lemma}\label{ST} 
     $R_2'(r)=T'_2(r)$ if and only if $r=0$.
\end{lemma}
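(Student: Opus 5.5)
The plan is to follow the proof of Lemma \ref{RR'} closely. The standing hypotheses are the ones under which $R'_2$ and $T'_2$ were defined: $r$ is a real root of $\gamma_2(t)=0$ and $\beta_5(r)=0$, so that by Remark \ref{REM} also $\beta_4(r)=0$ and $\beta_3(r)>0$. From the definitions \eqref{R1'-r},
\[
R'_2(r)-T'_2(r)=2\beta_2(r).
\]
Hence $R'_2(r)=T'_2(r)$ holds exactly when $\beta_2(r)=0$, and everything reduces to locating the real zeros of $\beta_2$ that are compatible with $\beta_5(r)=0$.

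For the direction ``$r=0$ implies equality'', substitute $t=0$. This gives $\beta_1(0)=4a^2$, $\beta_2(0)=0$, $\beta_3(0)=4a^4$ and $\beta_4(0)=\beta_5(0)=0$. By \eqref{Gamma2-t}, $\gamma_2(0)=0$, so $r=0$ is admissible and falls under Case 2. Then $R'_2(0)=T'_2(0)=4a^2+2a^2=6a^2$.

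For the converse, suppose $r\neq 0$ is admissible and $R'_2(r)=T'_2(r)$, so $\beta_2(r)=0$. The key observation is the factorization
\[
sr^3+sr+r^2+1=(1+r^2)(1+sr).
\]
Since $a>0$, $r\neq 0$ and $1+r^2>0$, this forces $1+sr=0$, that is, $r=-\tfrac1s$.

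It remains to test this value against $\beta_5(r)=0$; I expect this to be the only real computation. Write every term of the bracket in $\beta_5$ using $sr=-1$:
\[
s^2r^6=r^4,\qquad 2sr^5=-2r^4,\qquad -2s^2r^4=-2r^2,\qquad s^2r^2=1,\qquad 2sr=-2.
\]
Summing with the remaining terms $r^4$, $-2r^2$ and $1$, the bracket collapses to $-4r^2=-4/s^2$. Therefore
\[
\beta_5\!\left(-\tfrac1s\right)=2a^2\cdot\tfrac1{s^2}\cdot\left(-\tfrac4{s^2}\right)<0,
\]
which contradicts $\beta_5(r)=0$ (and indeed also Lemma \ref{BETA}). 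Hence no nonzero admissible $r$ gives $R'_2(r)=T'_2(r)$, and the lemma follows. The main risk is an arithmetic slip in this substitution, so I would double-check it term by term.
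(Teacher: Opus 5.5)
Your proposal is correct and follows essentially the same route as the paper: it reduces $R'_2(r)=T'_2(r)$ to $\beta_2(r)=0$, factors $sr^3+sr+r^2+1=(1+r^2)(1+sr)$ to force $r=-1/s$, and then shows that the bracket of $\beta_5$ evaluates to $-4/s^2\neq 0$ there, contradicting $\beta_5(r)=0$. Your term-by-term substitution is right, and your value $R'_2(0)=T'_2(0)=6a^2$ agrees with the paper's.
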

\begin{proof}
For $r=0$, we have 
\begin{eqnarray*}
\beta_1(r)=4a^2, \beta_3(r)=4a^4, \beta_5(r)=\beta_4(r)=\beta_2(r)=0.
\end{eqnarray*}
Also, from \eqref{gamma2-r},
$\gamma_2(r)=\gamma_2(0)=0$. Then $R'_2(r)=6a^2=T'_2(r)$.\\
Assume that there exists $r \in \mathbb{R}\setminus \{0\}$ satisfies $\gamma_2(r)=0$, $\beta_5(r)=0$ and $R'_2(r)=T'_2(r)$.
Then we have 
 $\beta_2(r)= 0$. This yields
 $2ar(sr^3+1+sr+r^2)=0$.
 Since $r\neq 0$ and $a>0$, it follows that 
 \begin{eqnarray*}
     sr^3+1+sr+r^2=0.\\
 \mbox{i.e., } \ 
     (sr+1)(r^2+1)=0.
 \end{eqnarray*}
 From the above equation, we have $sr+1=0$. Again $r$ satisfies the equation $\beta_5(t)=0$, which implies that
\begin{eqnarray}\label{six}
    s^2r^6+2sr^5-2s^2r^4+r^4+s^2r^2-2r^2+2sr+1=0.
\end{eqnarray}
Substituting the value   $r=-\frac{1}{s}$  into the equation \eqref{six}, we get $-\frac{4}{s^2}=0$, which is a contradiction.
 
Thus there does not exist any $r \neq 0$, which satisfies  $\gamma_2(r)=0$, $\beta_5(r)=0$,  and $R'_2(r)=T'_2(r)$.
 Hence, the result follows.  
\end{proof}

\begin{theorem}\label{W5}
 Suppose $a,s>0$ and \begin{eqnarray}\label{v}
v(t)\nonumber&=&2s^5t^{19}+(s^2-3s^4)t^{18}+(2s^3-2s^5)t^{17}+(2s^4-2s^2)t^{16}\nonumber\\&&+(4s^3-2s^5)t^{15}+(-2a^2s^3-8s^3+2s^5+2s)t^{13}-2a^2s^4t^{12}\nonumber\\&&+(2a^2s+4s^3-2s)t^{11}+(2s^2+4a^2s^4-4a^2s^2-2s^4)t^{10}\nonumber\\&&+(8a^2s^3-2s-4a^2s+2s^3)t^9+(-2a^2s^4-2a^2-2s^2)t^8\nonumber\\&&+(8a^2s+2s-4a^2s^3)t^7+(2a^4s^2-4a^2s^2+4a^2)t^6+(2a^2s^3+4a^4s)t^5\nonumber\\&&-(2a^2+4a^4s^2)t^4-(2a^2s+4a^4s)t^3-4a^4t^2+4a^4st+2a^4.
   \end{eqnarray} Then for $r \in \mathbb{R}$, the numerical range $W(A(5,s,a,r))$ forms a circular disk with origin as its center if and only if $r=0$ and a non-degenerate elliptical disk (non circular disk)  centered at origin if and only if at least one of $a,s$ and $r$ is not equal to $1$, and    $r$ is a real root of the equation $v(t)=0$ satisfying either  $ R_2(r)\geq 0$,  $T_2(r)\geq 0$ or $\beta_5(r)=0$.
\end{theorem}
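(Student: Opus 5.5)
The proof will follow the template of Theorem \ref{W4}, with the $\beta_i$ in place of the $\alpha_i$.

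\textbf{Step 1: the largest eigenvalue.} Expand $\det(H_\theta(A(5,s,a,r))-xI)$. Because the diagonal is zero and the matrix is tridiagonal, this characteristic polynomial has the form $x\,(x^4+Px^2+Q)$. Here $P$ is $-\frac14$ times the sum of the four quantities $|ae^{i\theta}+p_je^{-i\theta}|^2$, with $p_j\in\{r,sr^2,r^3,sr^4\}$. The coefficient $Q$ is $\frac1{16}$ times the sum of the products of those quantities over non-adjacent index pairs $(1,3)$, $(1,4)$, $(2,4)$. Each factor equals $(a+p_j)^2\cos^2\theta+(a-p_j)^2\sin^2\theta$. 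Solving the quadratic in $x^2$ and rewriting in terms of $\cos2\theta$ and $\cos4\theta$ should give
\[
\lambda_{\max}(H_\theta(A(5,s,a,r)))=\frac{1}{2\sqrt2}\sqrt{\beta_1(r)+\beta_2(r)\cos2\theta+\sqrt{\beta_3(r)+\beta_4(r)\cos2\theta+\beta_5(r)\cos4\theta}} .
\]
This is the routine but lengthy identification of the $\beta_i$.

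\textbf{Step 2: when this has the form \eqref{st}.} By Theorem \ref{largest}, $W$ is an elliptical disk centered at the origin if and only if the expression above has the form $\sqrt{c_1+c_2\cos2\theta}$ with nonnegative values at $\theta=0$ and $\theta=\pi/2$. This holds if and only if the inner radicand, viewed as a quadratic in $c=\cos2\theta$, namely $\beta_3-\beta_5+\beta_4c+2\beta_5c^2$, is a perfect square of an affine function of $c$.

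\begin{itemize}
\item If $\beta_5>0$, the radicand is a perfect square exactly when its discriminant vanishes, which is the condition $\gamma_2(r)=0$. Taking the square root $\sqrt{2\beta_5}\,(c+\beta_4/(4\beta_5))$ produces $R_2(r)-(R_2(r)-T_2(r))\sin^2\theta$. The conditions $R_2\ge0$ and $T_2\ge0$ are then exactly what is needed for the ellipse to be real. Unlike the $n=4$ case, I do not expect to prove these inequalities in general, which is why they appear as hypotheses in the statement.
\item If $\beta_5=0$, Remark \ref{REM} gives $\beta_4=0$ and $\beta_3>0$. Lemma \ref{R'} then gives the form with $R_2'$ and $T_2'$.
\end{itemize}

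Lemma \ref{BETA} excludes $\beta_5<0$.

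\textbf{Step 3: factoring $\gamma_2$.} Compute $\gamma_2(t)=Ct^k\,v(t)$ for some positive constant $C$ (presumably $C$ is a multiple of $a^2$) and some power $k$. This is the main obstacle: a heavy symbolic expansion of $8\beta_3\beta_5-8\beta_5^2-\beta_4^2$, which I would carry out with computer algebra and then check against the special case $a=s=1$. It yields that the real roots of $\gamma_2$ are $r=0$ together with the real roots of $v$.

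\textbf{Step 4: conclusion.} Normality occurs only when $a=s=r=1$, where the matrix is Hermitian, so the disk is non-degenerate otherwise.

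\begin{itemize}
\item At $r=0$ the matrix is a weighted shift, so $W$ is a circular disk.
\item For the converse on circularity, suppose $p=q$ with $r\neq0$. If $\beta_5>0$, then $R_2=T_2$ forces $\beta_2(r)=-\sqrt{2\beta_5(r)}$. Squaring gives $\beta_2^2=2\beta_5$, and this should reduce to $a^2r^4(\cdots)=0$ with a nonvanishing factor, a contradiction (compare $16a^2r^4=0$ for $n=4$). If $\beta_5=0$, Lemma \ref{ST} gives $r=0$, again a contradiction.
\item Conversely, a noncircular ellipse forces $\gamma_2(r)=0$ and $r\ne0$, using Lemma \ref{ST} to exclude $r=0$. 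Hence $v(r)=0$, together with the stated sign or $\beta_5$ conditions.
\end{itemize}
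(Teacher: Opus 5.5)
Your plan is correct and is essentially the paper's own proof: the same expansion of $\det(H_\theta(A(5,s,a,r))-xI)$ leading to \eqref{Lambda}, the factorization $\gamma_2(r)=32a^2r^2v(r)$, and the split into $\beta_5(r)>0$ and $\beta_5(r)=0$ via Lemma \ref{BETA}, Remark \ref{REM} and Lemma \ref{ST}. The circularity argument is also the same: the squared relation $\beta_2(r)^2=2\beta_5(r)$ works out to $16a^2r^4(s^2r^2+sr+1)=0$, whose quadratic factor has no real zeros. As a small improvement on both your plan and the paper, when $\gamma_2(r)=0$ and $\beta_5(r)>0$ one has $R_2(r)=\beta_1(r)+\beta_2(r)\pm\sqrt{\beta_3(r)+\beta_4(r)+\beta_5(r)}$ and $T_2(r)=\beta_1(r)-\beta_2(r)\pm\sqrt{\beta_3(r)-\beta_4(r)+\beta_5(r)}$ for suitable signs; these are $8$ times squares of real eigenvalues of the Hermitian matrices $H_0(A)$ and $H_{\pi/2}(A)$, so the inequalities $R_2(r),T_2(r)\ge 0$ that you planned to carry as hypotheses in fact hold automatically.
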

\begin{proof}
The characteristic polynomial of $H_{\theta}(A(5,s,a,r))$ is given by, 
\begin{eqnarray*}
\mbox{det}\left(H_{\theta}(A(5,s,a,r)) -xI\right)&=&-x\bigg(x^4-\frac{1}{4}x^2\big(|ae^{i \theta}+r^3e^{-i \theta}|^2+|ae^{i \theta}+sr^2e^{-i \theta}|^2\\&&+|ae^{i \theta}+re^{-i \theta}|^2+|ae^{i \theta}+sr^4e^{-i \theta}|^2\big)
\\&&+\frac{1}{16}\big(|ae^{i \theta}+re^{-i \theta}|^2|ae^{i \theta}+r^3e^{-i \theta}|^2\\&&+|ae^{i \theta}+re^{-i \theta}|^2|ae^{i \theta}+sr^4e^{-i \theta}|^2\\&&+|ae^{i \theta}+sr^2e^{-i \theta}|^2|ae^{i \theta}+sr^4e^{-i \theta}|^2\big)\bigg)\\ &=& -x\bigg(x^4-\frac{x^2}{4}\big(\cos^2\theta((a+sr^4)^2+(a+r^3)^2+(a+r)^2\\&&+(a+sr^2)^2\big))+\sin^2\theta((a-sr^4)^2+(a-r^3)^2+(a-r)^2\\&&+(a-sr^2)^2\big))+\frac{1}{16}\bigg((a+sr^4)^2(a+sr^2)^2\cos^4\theta\\&&+(a+sr^4)^2(a+r)^2\cos^4\theta+(a+r^3)^2(a+r)^2\cos^4\theta\\&&+(a-sr^4)^2(a-sr^2)^2\sin^4\theta+(a-sr^4)^2(a-r)^2\sin^4\theta\\&&+(a-r^3)^2(a-r)^2\sin^4\theta+\bigg((a+sr^4)^2(a-sr^2)^2\\&&+(a-sr^4)^2(a+sr^2)^2+(a+sr^4)^2(a-r)^2\\&&+(a-sr^4)^2(a+r)^2+(a+r^3)^2(a-r)^2\\&&+(a-r^3)^2(a+r)^2\bigg)\sin^2\theta \cos^2 \theta\bigg)\bigg)\\ &=&-x\bigg(x^4-\frac{1}{4}P_1x^2+\frac{1}{16}Q_1\bigg)
\end{eqnarray*}
where 
\begin{eqnarray*}
P_1&=&\cos^2\theta\bigg((a+sr^4)^2+(a+r^3)^2+(a+r)^2+(a+sr^2)^2\bigg)\\&&+\sin^2\theta\bigg((a-sr^4)^2+(a-r^3)^2+(a-r)^2+(a-sr^2)^2\bigg),
\,\,\,\mbox{and} 
    \\Q_1&=& \cos^4\theta\bigg((a+sr^4)^2(a+sr^2)^2+(a+sr^4)^2(a+r)^2+(a+r^3)^2(a+r)^2\bigg)\\&&+\sin^4\theta\bigg((a-sr^4)^2(a-sr^2)^2+(a-sr^4)^2(a-r)^2+(a-r^3)^2(a-r)^2\bigg)\\&&+\sin^2\theta \cos^2 \theta\bigg((a+sr^4)^2(a-sr^2)^2+(a-sr^4)^2(a+sr^2)^2\\&&+(a+sr^4)^2(a-r)^2+(a-sr^4)^2(a+r)^2+(a+r^3)^2(a-r)^2\\&&+(a-r^3)^2(a+r)^2\bigg).
\end{eqnarray*}
Therefore,
\small{\begin{align}\label{Lambda}
 \lambda_{\max}(H_{\theta}(A(5,s,a,r)))\nonumber &=\frac{1}{2\sqrt{2}}\sqrt{P_1+\sqrt{P_1^2-4Q_1}}\\&=\frac{1}{2\sqrt{2}}\sqrt{\beta_1(r)+\beta_2(r)\cos2\theta+\sqrt{\beta_3(r)+\beta_4(r)\cos2\theta+\beta_5(r)\cos4\theta}}.   
\end{align}} 
 Now from \eqref{Gamma2-t}, by calculating we determine the expression 
 \begin{eqnarray}
\nonumber \gamma_2(r)\nonumber
&=&32a^2r^2\bigg(2s^5r^{19}+(s^2-3s^4)r^{18}+(2s^3-2s^5)r^{17}+(2s^4-2s^2)r^{16}\\&&\nonumber+(4s^3-2s^5)r^{15}+(-2a^2s^3-8s^3+2s^5+2s)r^{13}-2a^2s^4r^{12}\\&&\nonumber+(2a^2s+4s^3-2s)r^{11}+(2s^2+4a^2s^4-4a^2s^2-2s^4)r^{10}\\&&\nonumber+(8a^2s^3-2s-4a^2s+2s^3)r^9+(-2a^2s^4-2a^2-2s^2)r^8\\&&\nonumber+(8a^2s+2s-4a^2s^3)r^7+(2a^4s^2-4a^2s^2+4a^2)r^6+(2a^2s^3+4a^4s)r^5\\&&\nonumber-(2a^2+4a^4s^2)r^4-(2a^2s+4a^4s)r^3-4a^4r^2+4a^4sr+2a^4\bigg)\\&=&\label{r19}32a^2r^2v(r),
 \end{eqnarray} where the polynomial $v(t)$ is given by \eqref{v}.
If $a=s=r=1$, the matrix  $A(5,s,a,r)$ is  normal, in particular Hermitian, hence  its numerical range $W(A(5,s,a,r))$ is a line segment.
  Now, suppose  that at least one of $a,s$ and $r$ is not equal to $1$ and $r$ is a  real root of the equation $v(t)=0$ satisfying either $R_2(r) \geq 0$, $T_2(r)\geq 0$ or $\beta_5(r)=0$. By \eqref{r19}, we have $\gamma_2(r)=0$. From Lemma \ref{BETA}, it follows that $\beta_5(r)$ is non-negative.
We now consider two cases as follows:

\textbf{ Case 1:} Let $\beta_5(r)>0$.  Then it follows that $R_2(r) \geq 0$ and $T_2(r)\geq 0$. Therefore,

 \begin{eqnarray*}
\lambda_{\max}(H_{\theta}(A(5,s,a,r)))&=&\frac{1}{2\sqrt{2}}\sqrt{\beta_1(r)+\beta_2(r)\cos2\theta+\sqrt{2\beta_5(r)}\Big(\cos2\theta+\frac{\beta_4(r)}{4\beta_5(r)}\Big)}\\ &=&  \frac{1}{2\sqrt{2}}\sqrt{R_2(r)-\Big(R_2(r)-T_2(r)\Big)\sin^2\theta}.
 \end{eqnarray*}
 Hence by Theorem \ref{largest}, the numerical range of the matrix $A(5,s,a,r)$ is an elliptical disk with origin as its center.

\textbf{Case 2:} Let $\beta_5(r)=0$. Then from Remark \ref{REM},
$\beta_4(r)=0$ and $\beta_3(r)>0$. So, 
\begin{eqnarray}\label{beta_5}
\lambda_{\max}(H_{\theta}(A(5,s,a,r)))\nonumber&=&\frac{1}{2\sqrt{2}}\sqrt{\beta_1(r)+\beta_2(r)\cos2\theta+\sqrt{\beta_3(r)}}\\\nonumber &=& \frac{1}{2\sqrt{2}}\sqrt{\beta_1(r)+\beta_2(r)+\sqrt{\beta_3(r)}-2\beta_2(r) \sin^2 \theta}\\ &=& \frac{1}{2\sqrt{2}} \sqrt{R'_2(r)-(R'_2(r)-T'_2(r))\sin^2 \theta}.
\end{eqnarray}
From Lemma \ref{R'}, it follows that both $R'_2(r)$ and $T'_2(r)$ are non-negative. Hence by Theorem \ref{largest}, the numerical range of the matrix  $A(5,s,a,r)$ is an elliptical disk with centered at origin.

For both the cases, if at least one of $a,s$ and $r$ is not equal to $1$, then the matrix  $A(5,s,a,r)$ is not a normal matrix. Hence the elliptical numerical range is necessarily non-degenerate.

When $r=0$, the matrix $A(5,s,a,r)$ reduces to a weighted shift matrix. Consequently, its numerical range $W(A(5,s,a,r))$ forms a circular disk with origin as its center.\\
Conversely, if $W(A(5,s,a,r))$ is non-degenerate elliptical disk with center at origin 
 whose horizontal and vertical semi-axes have lengths $p_2(r)$ and $q_2(r)$, respectively. Then at least one of $a,s$ and $r$ is not equal to $1$, otherwise the matrix becomes Hermitian. Thus, using Theorem \ref{largest}, we have
\begin{eqnarray*}
    \lambda_{\max}(H_{\theta}(A)) &=&\sqrt{(p_2(r))^2 -((p_2(r))^2-(q_2(r))^2) \sin^2 \theta}.
\end{eqnarray*}
Comparing with \eqref{Lambda}, we have $\gamma_2(r)=0$.
 Hence by Lemma \ref{BETA}, $\beta_5(r)\geq 0$.
Consequently,
 \begin{center}
 $8(p_2(r))^2=$
         $\begin{cases}
       R_2(r)  & \mbox{ if $\beta_5(r)>0$  }\\
       R'_2(r) & \mbox{ if $\beta_5(r)=0,$}
    \end{cases}$ and
    $8(q_2(r))^2=$
    $\begin{cases}
       T_2(r)  & \mbox{ if $\beta_5(r)>0$  }\\
       T'_2(r) & \mbox{ if $\beta_5(r)=0$}.
    \end{cases}$
     \end{center} Hence $R_2(r) \geq 0$, $T_2(r) \geq 0$ or $\beta_5(r)=0$. Since $\gamma_2(r)=0$, it follows from \eqref{r19} that either $r=0$ or $r$ is a real root of  $v(t)=0$. \\
     Next, our aim is to prove that if $W(A(5,s,a,r))$ is a non-degenerate elliptical disk with origin as its center, which is not circular, then $r$ is a real root of $v(t)=0$, whereas, if  $W(A(5,s,a,r))$ forms a circular disk  with  origin as its center, then $r=0$.\\
Now, suppose $W(A(5,s,a,r))$ is a non-degenerate elliptical disk with the origin as its center, which is not circular, i.e., $p_2(r)\neq q_2(r)$. Now, if  $r= 0$ then $\beta_5(r) =0$, $p_2(r)=\sqrt{\frac{R'_2(r)}{8}}$ and $q_2(r)=\sqrt{\frac{T'_2(r)}{8}}$. By Lemma \ref{ST}, $p_2(r)=\sqrt{\frac{R'_2(r)}{8}}= \sqrt{\frac{T'_2(r)}{8}}=q_2(r)$, which is a contradiction. Therefore, $r$ is nonzero, and hence $r$ is a real root of $v(t)=0$.\\
Again, if $W(A(5,s,a,r))$ is a disk which is circular  with  origin as its center. Then, by Theorem \ref{largest}, we have $p_2(r)=q_2(r)$. If possible let $r \neq 0$,  and $r$ satisfies the equation $\gamma_2(r)=0$. Then  by Lemma \ref{BETA}, for that $r$,  $\beta_5(r) \geq 0$.
Now we consider two cases as follows:\\
\textbf{Case 1:} Let $\beta_5(r) > 0$. Then $p_2(r)=\sqrt{\frac{R_2(r)}{8}}$ and $q_2(r)=\sqrt{\frac{T_2(r)}{8}}$. 
Therefore, $R_2(r)= T_2(r)$  implies  that \begin{eqnarray*}
     \beta_2(r)=-\sqrt{2\beta_5(r)},
 \end{eqnarray*} which yields
    $16a^2r^4(s^2r^2+sr+1)=0$.
  Since $r\neq 0$, then $s^2r^2+sr+1=0$ implies $r$ is not real, this leads to a contradiction.\\
\textbf{Case 2:} Let $\beta_5(r) = 0$. Then $p_2(r)=\sqrt{\frac{R'_2(r)}{8}}$ and $q_2(r)=\sqrt{\frac{T'_2(r)}{8}}$. Since $R'_2(r) = T'_2(r)$, Lemma \ref{ST} implies that $r= 0$, which contradicts the assumption.
Hence the result follows.
\end{proof}
\begin{remark}
    In particular, by choosing  $a=s=1$ in Theorem \ref{W5}, we obtain
    \begin{eqnarray}
        v(t)\nonumber&=& 2(t^{19}-t^{18}+t^{15}-3t^{13}-t^{12}+2t^{11}+2t^9-3t^8+3t^7+t^6\\&&+3t^5-3t^4-3t^3-2t^2+2t+1)\label{Ux} \\&=&\nonumber 2(t-1)^4(t^{15}+3t^{14}+6t^{13}+10t^{12}+16t^{11}+25t^{10}+35t^9\\&&+\nonumber43t^8+48t^7+49t^6+47t^5+40t^4+29t^3+16t^2+6t+1)\\&=&\nonumber 2(t-1)^4(t^2+t+1)d_1(t),
    \end{eqnarray}
    where 
    \begin{eqnarray*}
d_1(t)&=&t^{13}+2t^{12}+3t^{11}+5t^{10}+8t^{9}+12t^{8}+15t^7+16t^6+17t^5+16t^4\\&&+14t^3+10t^2+5t+1.
\end{eqnarray*}
So, the equation $v(t)=0$ holds if and only if either  $t=1$ or $d_1(t)=0$. Since $a=s=1$, and if $t=1$, then the matrix $A(5,1,1,1)$ is Hermitian and $W(A(5,1,1,1))$ reduces to a line segment. Then it follows from Theorem \ref{W5} that $W(A(5,s,a,r))$ forms a circular disk with the origin as its center if and only if $r=0$ and a non-degenerate elliptical disk (noncircular disk) with the origin as its center if and only if $r$ is a real root of $d_1(t)=0$. The uniqueness of this root follows from  \cite[Theorem 4]{chien2011numerical}, and its numerical value is $r=-0.3884$. At this root, both $R_2(r)$ and $T_2(r)$ are positive. Hence for $a=s=1$, \cite[Theorem 4]{chien2011numerical} is a particular case of Theorem \ref{W5}.
\end{remark}
\begin{question}
    Suppose $a>0$, $s>0$ and $r$ is a real root of $v(t)=0$ as defined in \eqref{v}. Are $R_2(r)$ and $T_2(r)$ always non-negative ?
\end{question}
\section{Flat portions on the boundary of the numerical range of $A(n,s,a,-1)$}\label{section5}
From \cite[Theorem 1]{chien2011numerical}, it is known that the numerical range
 $W(A(n,r))$ of the matrix  $A(n,r)$, as  defined in \eqref{A(n,r)}, is symmetric about both the real and imaginary axes.
Using arguments parallel to those in the proof of \cite[Theorem 1]{chien2011numerical}, the following result is obtained. 
\begin{theorem} \label{symmetric}
    Let $a,s>0$,  $r \in \mathbb{R}$ and  $A=A(n,s,a,r) \in M_n$. Then $W(A)$ exhibits symmetry about both the real and imaginary axes.
\end{theorem}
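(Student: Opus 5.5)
Since $A=A(n,s,a,r)$ has real entries, the first symmetry is immediate. The second symmetry will come from a diagonal unitary similarity taking $A$ to $-A$. Together these give invariance of $W(A)$ under $z\mapsto \bar z$ and $z\mapsto -z$, and hence also under $z\mapsto -\bar z$, which is reflection in the imaginary axis.

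\emph{Step 1 (real axis).} For $x\in\mathbb{C}^n$ with $\|x\|=1$, write $\bar x$ for the entrywise conjugate, which is again a unit vector. Because $A$ is real, $\overline{\langle Ax,x\rangle}=\langle A\bar x,\bar x\rangle$. Hence $\lambda\in W(A)$ implies $\bar\lambda\in W(A)$, so $W(A)=\overline{W(A)}$.

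\emph{Step 2 (origin).} Let $D=\mathrm{diag}(1,-1,1,-1,\ldots,(-1)^{n-1})$, which is unitary and Hermitian. Conjugation by $D$ multiplies the $(i,j)$ entry by $(-1)^{i+j}$. Every nonzero entry of $A$ lies on the super- or sub-diagonal, where $|i-j|=1$, and the main diagonal is zero. It follows that $DAD^*=-A$. Since unitary similarity preserves the numerical range, $W(A)=W(-A)=-W(A)$.

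\emph{Step 3 (imaginary axis).} Combining Steps 1 and 2, $\lambda\in W(A)$ implies $-\bar\lambda\in W(A)$, and $-\bar\lambda$ is the reflection of $\lambda$ in the imaginary axis. This step is routine. The only point requiring care is Step 2: one must check that the sign pattern of $D$ flips every off-diagonal entry. This holds precisely because the diagonal is zero and the matrix is tridiagonal, so the particular values $a$, $r^{i}$, $sr^{i}$ play no role.
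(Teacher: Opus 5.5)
Your proof is correct and complete. Realness of $A$ gives $W(A)=\overline{W(A)}$. The signature matrix $D=\mathrm{diag}(1,-1,1,\ldots)$ gives $DAD^*=-A$, hence $W(A)=-W(A)$. Composing the two yields reflection in the imaginary axis.

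The paper writes out no proof of this statement. It says only that the result follows by arguments parallel to \cite[Theorem 1]{chien2011numerical}, so there is no explicit argument in the text to compare with. Your proof supplies the missing argument. It also makes clear that it uses only two features: the entries are real, and $A$ is tridiagonal with zero diagonal.

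A route closer to the computations of Sections 2--4 would use support functions. For a Hermitian tridiagonal matrix with zero diagonal, the characteristic polynomial depends only on the squared moduli of the off-diagonal entries. For $H_\theta(A)$ these are $\frac14|ae^{i\theta}+c_je^{-i\theta}|^2=\frac14\bigl(a^2+c_j^2+2ac_j\cos 2\theta\bigr)$, where $c_j$ are the real subdiagonal entries. Hence $\lambda_{\max}(H_\theta(A))$ is invariant under $\theta\mapsto-\theta$ and under $\theta\mapsto\theta+\pi$. These are exactly the real-axis and central symmetries of the compact convex set $W(A)$.

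That version needs the support-function description of convex sets and a small determinant recursion. Your unitary-similarity argument avoids both and needs no computation.
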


Our next objective is to determine the flat portions on the boundary of the numerical range of the matrix
 $A(n,s,a,r)$ for the special case $r=-1$. Throughout this section, $\Im(e^{-i\theta}A)$ denotes imaginary part of $e^{-i\theta}A$ and $\Re(z)$ denotes the real part of a complex number $z$.

Let $A$ be an $n\times n$ tridiagonal matrix with complex entries of the following form
\begin{eqnarray}\label{tridiagonal}
A=
    \begin{bmatrix}
        a_1 & b_1 &0 &\cdots & 0\\
        c_1 & a_2 &b_2 &\cdots & 0\\
        0 & c_2 &a_3 &\cdots & 0\\
      \vdots & \vdots &\vdots & \ddots& b_{n-1}\\
      0 & 0 & \cdots &c_{n-1}& a_n\\
    \end{bmatrix}.
\end{eqnarray}

The following theorem is important for proving the main results of this section.

\begin{theorem}\cite{brown2004matrices}\label{hh}
    Let $A$ be an $n\times n$  tridiagonal matrix  of the form \eqref{tridiagonal}, which is proper. Then the boundary of the  numerical range $W(A)$ has a flat portion at an angle $\theta$ to the positive $x$-axis if and only if
    \begin{enumerate}
        \item The set $J=\{j:b_j=e^{2i\theta}\overline{c_j}\}$ is non empty;
        \item  At least two matrices among the tridiagonal Hermitian $\Im(e^{-i\theta}A_k)$ possess the same simple eigenvalue $\mu$, where $\mu$ coincide with either the minimum or maximum eigenvalue of $\Im(e^{-i\theta}A)$;\\
       (Here,  $A_k=A[j_{k-1}+1,\ldots, j_k]$; $k=1, \ldots, m=|J|+1, j_0=0, j_m=n$, and $j_1 < \cdots <j_{m-1}$ are the elements of the set $J$.) 
        \item  The  eigenvectors (unit) $x_k$ corresponding to the eigenvalue $\mu$ of $\Im(e^{-i\theta}A_k)$ are as either $\Re(e^{-i\theta}x_k^* A_
        kx_k)$ are not all the same, or there are two consecutive values of $k$ (say, $l_1$ and $l_1+1$) for which both the last coordinate of the unit eigenvector $x_{l_1}$ and first coordinate of  the unit eigenvector $x_{l_1+1}$ are non zero.\\
        (Here, $k$ ranges over the set $K\subseteq \{1,\ldots, m\}$ of indices for which $\mu$ is an eigenvalue of the matrix $\Im(e^{-i\theta}A_k)$.) 
    \end{enumerate}
\end{theorem}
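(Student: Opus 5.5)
The approach is to reduce the statement to the standard flat-portion criterion and then exploit the tridiagonal structure. The criterion reads as follows. After rotating by $e^{-i\theta}$, a segment of $\partial W(A)$ parallel to $e^{i\theta}$ corresponds to a horizontal supporting line $\Im z=\mu$ of $W(e^{-i\theta}A)$. Here $\mu$ is the maximum (or minimum) eigenvalue of $\Im(e^{-i\theta}A)$. The intersection $W(e^{-i\theta}A)\cap\{\Im z=\mu\}$ equals $\mu i+W(P\,\Re(e^{-i\theta}A)\,P|_E)$, where $E$ is the $\mu$-eigenspace and $P$ is the orthogonal projection onto $E$. So there is a flat portion at angle $\theta$ if and only if this compression of $\Re(e^{-i\theta}A)$ to $E$ is not a scalar multiple of the identity. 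I would first record this (classical) lemma, since it only uses that $x^*(e^{-i\theta}A)x$ has imaginary part $\mu$ exactly when $x\in E$.

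Next I compute $\Im(e^{-i\theta}A)$ for $A$ as in \eqref{tridiagonal}. It is Hermitian tridiagonal with $(j,j+1)$ entry $\frac{1}{2i}(e^{-i\theta}b_j-e^{i\theta}\overline{c_j})$. This entry vanishes precisely when $b_j=e^{2i\theta}\overline{c_j}$, i.e.\ when $j\in J$. Hence $\Im(e^{-i\theta}A)=\bigoplus_k \Im(e^{-i\theta}A_k)$, and each block is an irreducible Hermitian tridiagonal matrix. Each block therefore has only simple eigenvalues, and its eigenvectors have nonzero first and last coordinates. Consequently $E=\operatorname{span}\{\tilde x_k: k\in K\}$, where $\tilde x_k$ is $x_k$ padded with zeros, and $\dim E=|K|$.

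If $J=\emptyset$, then $\dim E=1$, the compression is scalar, and there is no flat portion; this gives the necessity of condition (1). Condition (2) is exactly the requirement $\dim E\ge 2$ together with $\mu$ being an extreme eigenvalue of $\Im(e^{-i\theta}A)$.

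For condition (3), I write the $|K|\times|K|$ compression matrix in the orthonormal basis $\{\tilde x_k\}$.
\begin{itemize}
\item Its diagonal entries are $\tilde x_k^*\,\Re(e^{-i\theta}A)\,\tilde x_k=\Re(e^{-i\theta}x_k^*A_kx_k)$.
\item Its off-diagonal entry for $k\ne k'$ can be nonzero only if the blocks are adjacent, say $k'=k+1=l_1+1$, because $\Re(e^{-i\theta}A)$ is tridiagonal.
\item In the adjacent case the entry equals $\overline{(x_{l_1})_{\mathrm{last}}}\,(x_{l_1+1})_{\mathrm{first}}\cdot\tfrac12(e^{-i\theta}b_j+e^{i\theta}\overline{c_j})$ with $j=j_{l_1}\in J$.
\end{itemize}
Since $j\in J$, this coupling factor simplifies to $e^{-i\theta}b_j$, and properness ($b_j,c_j$ not both zero, together with $b_j=e^{2i\theta}\overline{c_j}$) forces $b_j\neq0$. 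So the compression is non-scalar if and only if the diagonal values are not all equal, or some consecutive pair has nonzero last and first coordinates. This is condition (3).

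The step requiring most care is this last bookkeeping. Two points need checking: that the phase in the coupling entry really reduces to a nonzero multiple of $b_j$ when $j\in J$, and that non-adjacent blocks in $K$ contribute nothing. Both follow from the tridiagonal pattern, but the indices $j_{k-1}+1,\dots,j_k$ must be tracked carefully.
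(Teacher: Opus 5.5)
The paper gives no proof of this statement. It is quoted from \cite{brown2004matrices} and used only as a black box in Section~\ref{section5}, so there is no in-text argument to compare with.

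Your proof is correct and self-contained. The following steps all check out:
\begin{enumerate}
\item The compression criterion: a flat portion lies on the supporting line $\Im z=\mu$ of $W(e^{-i\theta}A)$ if and only if the compression of $\Re(e^{-i\theta}A)$ to the $\mu$-eigenspace $E$ of $\Im(e^{-i\theta}A)$ is non-scalar.
\item The splitting of $\Im(e^{-i\theta}A)$ along $J$ into irreducible Hermitian tridiagonal blocks.
\item The computation of the $|K|\times|K|$ compression in the basis $\{\tilde x_k\}$. Its diagonal entries are $\Re(e^{-i\theta}x_k^*A_kx_k)$. Its only off-diagonal entries link blocks $l_1$ and $l_1+1$, and they equal $\overline{(x_{l_1})_{\mathrm{last}}}\,(x_{l_1+1})_{\mathrm{first}}\,e^{-i\theta}b_{j_{l_1}}$, where $b_{j_{l_1}}\neq 0$ by properness.
\end{enumerate}
Properness is used exactly where it is needed.

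Beyond the bare citation, your route shows that several clauses of the statement are automatic for proper tridiagonal $A$:
\begin{enumerate}
\item Every eigenvalue of each block $\Im(e^{-i\theta}A_k)$ is simple.
\item Every $x_k$ has nonzero first and last coordinates. So the second alternative in (3) just says that $K$ contains two consecutive integers.
\item Condition (1) follows from (2).
\end{enumerate}
Consequently, in the applications of Section~\ref{section5} (Theorems~\ref{2flat} and~\ref{parallel_y}), the explicit eigenvector computations used to check (3) are unnecessary. It suffices to observe that $K$ contains two adjacent block indices.
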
 
\begin{theorem}\label{2flat}
Let
     $n\geq 4$ and $a,s>0$. If $a=s$, then the boundary of the numerical range $W(A(n,s,a,-1))$  has two flat portions parallel to the $x$-axis.
\end{theorem}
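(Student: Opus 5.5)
The plan is to apply Theorem \ref{hh} with $\theta=0$, since a flat portion parallel to the $x$-axis corresponds to angle $0$. First I write out $A=A(n,s,a,-1)$ concretely. The super-diagonal entries are $b_j=a$ for all $j$. The sub-diagonal entries are $c_j=p_j$, where $p_j=r^j=-1$ for $j$ odd and $p_j=sr^j=s$ for $j$ even. This holds in both the even and odd forms \eqref{even}, \eqref{odd2}. All $b_j=a\neq 0$, so $A$ is proper and Theorem \ref{hh} applies.

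\textbf{Condition (1).} With $\theta=0$ we have $J=\{j: b_j=\overline{c_j}\}$. For odd $j$ this would require $a=-1$, which is impossible. For even $j$ it requires $a=s$, which holds by hypothesis. Hence $J=\{2,4,6,\ldots\}\cap\{1,\ldots,n-1\}$, which is nonempty since $n\ge 4$. The resulting blocks are $A_k=A[2k-1,2k]=\begin{bmatrix}0&a\\-1&0\end{bmatrix}$, together with a final $1\times1$ zero block when $n$ is odd. Since $n\ge4$, there are at least two such $2\times 2$ blocks.

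\textbf{Condition (2).} For each $2\times2$ block I compute
\[
\Im(A_k)=\tfrac{1}{2i}(A_k-A_k^*)=\begin{bmatrix}0&-i\frac{a+1}{2}\\ i\frac{a+1}{2}&0\end{bmatrix}.
\]
Its eigenvalues $\pm\frac{a+1}{2}$ are simple. The key observation is that for even $j$ the $(j,j+1)$ and $(j+1,j)$ entries of $\Im(A)$ are $\frac{b_j-\overline{c_j}}{2i}=0$ because $a=s$. Hence $\Im(A)$ is the direct sum of the $\Im(A_k)$, possibly together with a $1\times1$ zero block. Therefore $\lambda_{\max}(\Im A)=\frac{a+1}{2}$ and $\lambda_{\min}(\Im A)=-\frac{a+1}{2}$. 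Each of these is a simple eigenvalue of every $2\times2$ block, and there are at least two such blocks. Thus condition (2) holds for $\mu=\frac{a+1}{2}$ and also for $\mu=-\frac{a+1}{2}$.

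\textbf{Condition (3).} The unit eigenvectors are $x_k=\frac1{\sqrt2}(1,\pm i)^T$. Since $\Re(A_k)$ has zero diagonal and real off-diagonal entry $\frac{a-1}{2}$, one checks that $\Re(x_k^*A_kx_k)=0$ for every $k$. So the first alternative of (3) fails and I must use the second. For the consecutive indices $k=1,2$, both of which lie in $K$, the last coordinate of $x_1$ is $\pm i/\sqrt2\neq0$ and the first coordinate of $x_2$ is $1/\sqrt2\neq0$. Hence (3) holds.

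\textbf{Conclusion.} Theorem \ref{hh} then gives a flat portion at angle $0$ on the supporting line $y=\frac{a+1}{2}$, and another on $y=-\frac{a+1}{2}$; the latter is also its mirror image under Theorem \ref{symmetric}. These are two distinct horizontal flat portions. The main care point is the reading of condition (3): I must make sure that $K$ contains two consecutive indices and that the coordinate condition refers to the blocks as embedded consecutively. This is where $n\ge4$ is needed. A secondary check is to confirm that the $1\times1$ block in the odd case does not affect $\lambda_{\max}$ or $\lambda_{\min}$, which holds since its eigenvalue is $0$.
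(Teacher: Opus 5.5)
Your proposal is correct and follows essentially the same route as the paper. Both apply Theorem \ref{hh} at $\theta=0$, with $J$ the set of even indices, identical blocks $A_k=\begin{bmatrix}0&a\\-1&0\end{bmatrix}$ (plus a $1\times 1$ zero block for odd $n$), $\mu=\pm\frac{a+1}{2}$, and eigenvectors $\frac{1}{\sqrt2}(1,\pm i)$ satisfying the consecutive-coordinate alternative of condition (3). You are somewhat more explicit than the paper: you check properness, observe that $\Re(x_k^*A_kx_k)=0$ so only the second alternative of (3) is available, treat even and odd $n$ uniformly, and get the lower flat portion directly from $\mu=-\frac{a+1}{2}$ rather than only from symmetry.
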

\begin{proof}
We prove the theorem by considering two separate cases:

\textbf{Case 1 }($n$ is odd):
Since $n$ is an odd integer with $n\geq 5$, there exist an integer $m \geq 3$ such that $n=2m-1$. Hence  for the matrix $A(2m-1,a,a,-1)$, the set $J$, as defined in Theorem \ref{hh}(1), at $\theta =0$, is 
 $$J=\{2,4,6,\ldots,2(m-1)\}.$$ Then clearly  $|J|+1=m$.
 By Theorem \ref{hh}(2), we have
  $2=j_1<j_2=4< \cdots < j_{m-1}=2(m-1)$, together with
 $j_0= 0, j_m= n=2m-1$.

From the definition of $A_k$ in Theorem \ref{hh}(2), it follows that for the matrix $A(2m-1,a,a,-1)$ \begin{eqnarray*}
       A_1= A[1,2]=\begin{bmatrix}
           &0& &a&\\
           &-1& &0&
       \end{bmatrix},
   \end{eqnarray*}
   \begin{eqnarray*}
       A_2=A_3=\cdots=A_{m-1}=\begin{bmatrix}
           &0& &a&\\
           &-1& &0&
       \end{bmatrix}
       \,\,\,\mbox{and  $A_m=[0]$ .}
   \end{eqnarray*}
 
 Then \begin{eqnarray*}
     \Im(A_1)=\Im(A_2)=\cdots = \Im(A_{m-1})=\begin{bmatrix}
           &0& &\frac{a+1}{2i}&\\
           &\frac{-1-a}{2i}& &0&
       \end{bmatrix},
 \end{eqnarray*} and  
  clearly the eigenvalues  of $\Im(A_k)$ are $\pm \frac{a+1}{2}$ for  $k=1,2,\ldots,(m-1)$. 
Since \begin{eqnarray*}
       \Im(A)=\begin{bmatrix}
           &0& &\frac{a+1}{2i}&\\
           &-\frac{a+1}{2i}& &0&
       \end{bmatrix}\oplus \cdots \oplus \begin{bmatrix}
           &0& &\frac{a+1}{2i}&\\
           &-\frac{a+1}{2i}& &0&
       \end{bmatrix} \oplus [0],
   \end{eqnarray*}
    the eigenvalues of $\Im(A)$ are $0$ and $\pm \frac{a+1}{2}$.  By selecting $\mu$ (as defined in Theorem \ref{hh}) to be either $\frac{a+1}{2}$ or $-\frac{a+1}{2}$, the conditions of Theorem \ref{hh}(2) are satisfied.

 Since  $\Im(A_1)=\Im(A_2)=\cdots=\Im(A_{m-1})$, we choose $l=1$ and $\mu=\frac{a+1}{2}$ in Theorem \ref{hh}(3). It follows that  $x_1=x_2=\frac{1}{\sqrt{2}}(1,i)$, which satisfies the conditions of Theorem \ref{hh}(3).
Hence, the numerical range $W(A(2m-1,a,a,-1))$ contains a flat portion on its boundary parallel to the $x$-axis. 
Moreover, by Theorem \ref{symmetric}, $W(A(2m-1,a,a,-1))$ is symmetric about the real axis. 
Since  $A(2m-1,a,a,-1)$ is not  Hermitian, the desired result follows.

\textbf{Case 2} ($n$ is even):
By a similar argument, the same conclusion holds for even values of $n$.
\end{proof}
\begin{remark}\label{rr} 
 By Theorem \ref{2flat}, for $n\geq 4$, the boundary of the numerical range $W(A(n,a,a,-1))$ contains two flat portions parallel to the $x$-axis. Since the largest and smallest eigenvalues of $\Im(A)$ are $\frac{a+1}{2}$ and $-\frac{a+1}{2}$ respectively, it follows from the convexity of  $W(A(n,a,a,-1))$ that the flat portions parallel to $x$- axis lies on the lines     $$y=\pm \frac{a+1}{2}.$$
 
        In particular, if we take $a=s=1$ and $n\geq 5$ is odd in Theorem \ref{2flat}, it follows that numerical range of the matrix $A(n,-1)$ (see \eqref{A(n,r)}) has two flat portions parallel to $x$-axis, and lying on the lines $y=\pm1$, which is also reflected in \cite[Theorem 7]{chien2011numerical}.
    
\end{remark}
\begin{theorem}\label{flat}
Let $n\geq 4$ and $a,s>0$. If $a=s\neq1$, the boundary of the numerical range $W(A(n,s,a,-1))$ contains no flat portions parallel to the $y$-axis.
\end{theorem}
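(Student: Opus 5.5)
The plan is to apply Theorem \ref{hh} at the angle $\theta=\frac{\pi}{2}$, which corresponds to flat portions parallel to the $y$-axis, and to show that its condition (1) already fails. Theorem \ref{hh} states that conditions (1)--(3) are necessary for a flat portion at angle $\theta$. So if the set $J$ is empty at $\theta=\frac{\pi}{2}$, no flat portion parallel to the $y$-axis can exist.

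First I will check that Theorem \ref{hh} applies, i.e. that $A=A(n,a,a,-1)$ is a proper tridiagonal matrix. In the notation of \eqref{tridiagonal}, every superdiagonal entry is $b_j=a>0$. Hence no pair $b_j=c_j=0$ occurs, and $A$ is proper. Next I will write out the subdiagonal entries for $r=-1$ and $s=a$. From the definition of $p_{i-1}$, we get $c_j=p_j=(-1)^j$ when $j$ is odd, so $c_j=-1$. When $j$ is even, $c_j=s(-1)^j=a$. Thus the subdiagonal reads $-1,a,-1,a,\ldots$, and this holds for both parities of $n$ because the last entry follows the same rule.

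Then I will compute $J=\{j: b_j=e^{2i\theta}\overline{c_j}\}$ at $\theta=\frac{\pi}{2}$. Here $e^{2i\theta}=-1$, so the defining condition becomes $a=-c_j$ (all entries are real). For odd $j$ this reads $a=1$, which is excluded by the hypothesis $a\neq 1$. For even $j$ it reads $a=-a$, i.e. $a=0$, which is impossible since $a>0$. Therefore $J=\emptyset$, condition (1) of Theorem \ref{hh} fails, and $\partial W(A(n,a,a,-1))$ has no flat portion at angle $\frac{\pi}{2}$. A flat portion at angle $\frac{3\pi}{2}$ gives the same condition, since $e^{2i\theta}=-1$ again, so both vertical directions are covered. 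This completes the argument.

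There is no real obstacle here. The only points needing care are the correct indexing of the subdiagonal entries $c_j$ under the definition of $p_{i-1}$, and the observation that angle $\frac{\pi}{2}$ is the relevant one. It is also worth remarking why $a\neq1$ is needed. When $a=1$, every odd $j$ lies in $J$, which is consistent with the vertical flat portions of $A(n,-1)$ found in \cite{chien2011numerical}.
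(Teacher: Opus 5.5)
Your proof is correct and follows essentially the same route as the paper: apply Theorem \ref{hh} at $\theta=\frac{\pi}{2}$ and observe that the condition $b_j=-\overline{c_j}$ forces $a=1$ (odd $j$) or $a=0$ (even $j$), so $J=\emptyset$. Your version makes explicit the steps the paper skips: the properness check, the subdiagonal pattern $-1,a,-1,a,\ldots$, and the angle $\frac{3\pi}{2}$.
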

\begin{proof}
     For $\theta =\frac{\pi}{2}$, according to condition of Theorem \ref{hh}(1), the relation $b_j=e^{i\pi}\overline{c_j}$ implies either $a=1$ or $a=0$, which is not possible. Thus, according to Theorem \ref{hh}(1), the  set $J$ is empty. Hence the result follows.
\end{proof}
The following is a supporting example of the above two results.
\begin{example}
    Consider $a=s=2$ and the order of the matrix is $n=6$. Then, the shape of  $W(A(6,2,2,-1))$ is given in the Figure \ref{W(A(6,2,2,-1))}.
\end{example}
\begin{figure}[ht!]
         \centering
         \includegraphics[scale=.28]{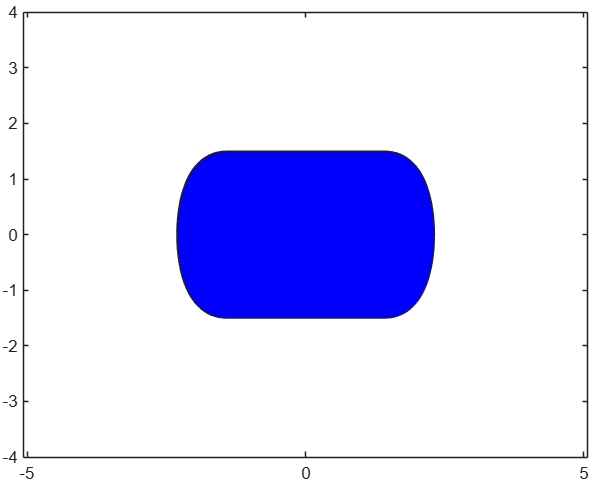}
        \caption{ $W(A(6,2,2,-1))$}\label{W(A(6,2,2,-1))}
\end{figure}

\begin{theorem}\label{parallel_y}
    Let $n\geq 5$ and $s>0$. If $a=1$, then the boundary of the numerical range $W(A(n,s,a,-1))$ exhibits two flat portions parallel to the $y$-axis.
\end{theorem}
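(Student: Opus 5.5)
The plan is to apply Theorem \ref{hh} with $\theta=\frac{\pi}{2}$, closely following the proof of Theorem \ref{2flat}, and then use the symmetry in Theorem \ref{symmetric} to get the second flat portion. First, for $a=1$ and $r=-1$ I will write out the entries of $A=A(n,s,1,-1)$. The super-diagonal entries are $b_j=1$. The sub-diagonal entries follow from the definition of $p_{i-1}$:
\begin{itemize}
\item $c_j=p_j=(-1)^j=-1$ when $j$ is odd (since then $i=j+1$ is even);
\item $c_j=s(-1)^j=s$ when $j$ is even.
\end{itemize}
Since all $b_j\neq 0$, $A$ is proper, so Theorem \ref{hh} applies. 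At $\theta=\frac{\pi}{2}$, condition (1) reads $b_j=e^{i\pi}\overline{c_j}=-c_j$, that is, $c_j=-1$. Because $s>0$, this holds exactly for odd $j$, so $J=\{1,3,5,\ldots\}\cap\{1,\ldots,n-1\}$ is nonempty.

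Next I will list the diagonal blocks $A_k$. The first block is $A_1=A[1]=[0]$. The next blocks are $A_2=A[2,3],\ A_3=A[4,5],\ldots$, each equal to $\begin{bmatrix}0&1\\ s&0\end{bmatrix}$. The last block depends on parity:
\begin{itemize}
\item for $n$ odd it is the $2\times 2$ block $A[n-1,n]$ above;
\item for $n$ even it is a trailing $[0]$.
\end{itemize}
Since $\Im(e^{-i\pi/2}X)=\Im(-iX)=-\Re(X)$, each $2\times2$ block gives $\Im(-iA_k)=-\frac{1+s}{2}\begin{bmatrix}0&1\\1&0\end{bmatrix}$, with simple eigenvalues $\pm\frac{1+s}{2}$, and each $1\times 1$ block gives $0$. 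Moreover $\Im(-iA)=-\Re(A)$ is the direct sum of these blocks, because the entries coupling consecutive blocks ($j\in J$) satisfy $b_j+\overline{c_j}=0$. Hence the extreme eigenvalues of $\Im(-iA)$ are $\pm\frac{1+s}{2}$.

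The step that needs care, and the only place the hypothesis $n\ge5$ enters, is condition (2): we need at least two $2\times 2$ blocks. Their number is $\frac{n-1}{2}$ for odd $n$ and $\frac{n-2}{2}$ for even $n$, and this is at least $2$ exactly when $n\ge5$. Taking $\mu=\frac{1+s}{2}$, it is a simple eigenvalue of $\Im(-iA_2)$ and of $\Im(-iA_3)$, and it equals $\lambda_{\max}(\Im(-iA))$.

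For condition (3) I will use the consecutive indices $l_1=2$, $l_1+1=3$. The unit eigenvectors are $x_2=x_3=\frac{1}{\sqrt2}(1,-1)$, so the last coordinate of $x_2$ and the first coordinate of $x_3$ are both nonzero. Theorem \ref{hh} then gives a flat portion of $\partial W(A)$ at angle $\frac{\pi}{2}$, i.e. parallel to the $y$-axis. By convexity it lies on the supporting line $x=\pm\frac{1+s}{2}$, since $\Re W(A)=W(\Re A)=\left[-\frac{1+s}{2},\frac{1+s}{2}\right]$.

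Finally, Theorem \ref{symmetric} gives symmetry of $W(A)$ about the imaginary axis, so there is a second flat portion on the opposite line $x=\mp\frac{1+s}{2}$. These two lines are distinct since $s>0$, so $\partial W(A)$ has two flat portions parallel to the $y$-axis.
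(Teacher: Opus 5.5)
Your proposal is correct and follows the paper's proof essentially step for step: you apply Theorem \ref{hh} at $\theta=\frac{\pi}{2}$ with $J$ the odd indices, $A_1=[0]$ and the identical blocks $\begin{bmatrix}0&1\\ s&0\end{bmatrix}$, take $\mu=\frac{1+s}{2}$ with $x_2=x_3=\frac{1}{\sqrt2}(1,-1)$, and get the second flat portion from the symmetry in Theorem \ref{symmetric}. You also go slightly beyond the paper: you treat even $n$ explicitly, where the count of $\frac{n-2}{2}$ blocks shows why $n\ge5$ is needed, and you separate the two flat portions by the distinctness of the lines $x=\pm\frac{1+s}{2}$ rather than by non-Hermitianity.
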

\begin{proof}
We establish the theorem by analyzing two distinct cases:

\textbf{Case 1} ($n$ is odd): Since $n$ is an odd integer with $n\geq 5$, there exist an integer $m \geq 3$ such that $n=2m-1$. Then for the matrix $A(2m-1,s,1,-1)$, the set $J$, as defined in Theorem \ref{hh}(1), at $\theta=\frac{\pi}{2}$, is $$J=\{1,3,\ldots,2m-3\}.$$ Then clearly  $|J|+1=m$. By Theorem \ref{hh}(2), we have $1=j_1<j_2=3< \cdots < j_{m-1}=2m-3$, with $j_0= 0, j_m= n=2m-1$.

From the definition of $A_k$ in Theorem \ref{hh}(2), it follows that for the matrix $A(2m-1,s,1,-1)$
\begin{eqnarray*}
    A_1=\begin{bmatrix}
        0
    \end{bmatrix},
    \,\, \mbox{and}\,\,\,
       A_2=A_3=\cdots=A_{m}=\begin{bmatrix}
           &0& &1&\\
           &s& &0&
       \end{bmatrix}.
   \end{eqnarray*}
   Then for $k=2,3,\ldots, m$, we have
   \begin{eqnarray*}
       \Im(e^{-i\frac{\pi}{2}}A_k)=-\frac{1}{2}(A_k+A_k ^*)=\begin{bmatrix}
           &0& &-\frac{1+s}{2}&\\
           &-\frac{1+s}{2}& &0&
       \end{bmatrix},
   \end{eqnarray*}
  and  
  clearly the eigenvalues  of $ \Im(e^{-i\frac{\pi}{2}}A_k)$ are $\pm \frac{1+s}{2}$ for $k=2,3,\ldots, m$.

   Again since 
   \begin{eqnarray*}
       \Im(e^{-i\frac{\pi}{2}}A)&=&-\frac{1}{2}(A+A^*)\\&=&[0]\oplus \begin{bmatrix}
           &0& &-\frac{1+s}{2}&\\
           &-\frac{1+s}{2}& &0&
       \end{bmatrix}\oplus \cdots \oplus \begin{bmatrix}
           &0& &-\frac{1+s}{2}&\\
           &-\frac{1+s}{2}& &0&
       \end{bmatrix},
   \end{eqnarray*} the eigenvalues of $\Im(e^{-i\frac{\pi}{2}}A)$ are $0$, $\pm\frac{1+s}{2}$. By selecting $\mu$ (as defined in Theorem \ref{hh}) to be either $\frac{1+s}{2}$ or $-\frac{1+s}{2}$, the conditions of Theorem \ref{hh}(2) are satisfied.

Since  $\Im(A_2)=\cdots=\Im(A_m)$, we choose $l=2$ and $\mu=\frac{s+1}{2}$ in Theorem \ref{hh}(3). It follows that $x_2=x_3=\frac{1}{\sqrt{2}}(1,-1)$  which satisfies the conditions of Theorem \ref{hh}(3).
Hence, the numerical range $W(A(2m-1,s,1,-1))$ contains a flat portion on its boundary parallel to the $y$-axis. 
Moreover, by Theorem \ref{symmetric},  $W(A(2m-1,s,1,-1))$ is symmetric about the imaginary axis. 
Since  $A(2m-1,s,1,-1)$ is not  Hermitian, the desired result follows.

 \textbf{Case 2} ($n$ is even):  
By an analogous argument, the same conclusion follows for even values of  $n$.
\end{proof}
The following is a supporting example of the above result.
\begin{example}
    Consider $a=1, s=5$ and the order of the matrix is $n=7$. Then, the shape of  $W(A(7,5,1,-1))$ is given in the Figure \ref{W(A(7,5,1,-1))}.
\end{example}
\begin{figure}[ht!]
         \centering
         \includegraphics[scale=.26]{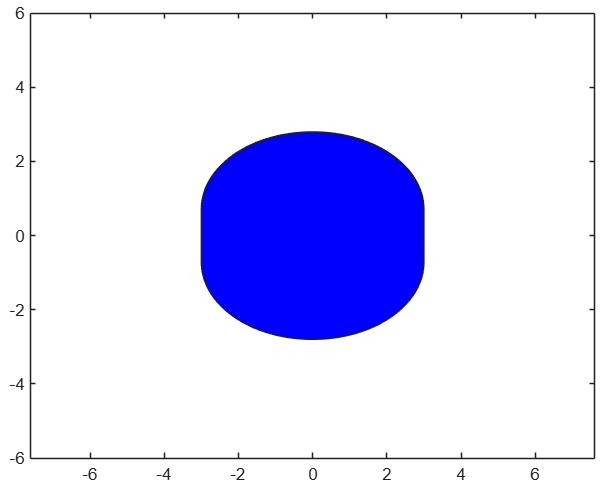}
        \caption{$W(A(7,5,1,-1))$}\label{W(A(7,5,1,-1))}
\end{figure}

\begin{remark}
By Theorem \ref{parallel_y}, for $n\geq 5$, the boundary of   $W(A(n,s,1,-1))$ contains two flat portions parallel to the $y$-axis. Since the largest and smallest eigenvalues of $\Re(A)$ are $\frac{s+1}{2}$ and $-\frac{s+1}{2}$ respectively, it follows from the convexity of $W(A(n,s,1,-1))$ that the flat portions parallel to the $y$-axis lies on the lines     $$x=\pm \frac{s+1}{2}.$$ 
 
        In particular, if we take $s=1$  and $n\geq 5$ is odd in Theorem \ref{parallel_y},  it follows that  $W(A(n,-1))$ has two flat portions parallel to the $y$-axis, and lying on the lines $x=\pm1$, which is also reflected in \cite[Theorem 7]{chien2011numerical}. 
\end{remark}
\begin{theorem}\label{fflat}
    Let $n\geq 4$ and  $a,s>0$. If $a\neq 1$ and $a\neq s$, then the boundary of the numerical range $W(A(n,s,a,-1))$ has no flat portions in any direction. 
\end{theorem}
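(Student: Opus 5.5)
We will use Theorem \ref{hh}, in particular its first condition: a flat portion at angle $\theta$ can occur only if the set $J=\{j: b_j=e^{2i\theta}\overline{c_j}\}$ is nonempty. So it suffices to show that for $a\neq1$, $a\neq s$, the set $J$ is empty for every $\theta\in[0,\pi)$.

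First we check that $A=A(n,s,a,-1)$ is proper. All super-diagonal entries are $b_j=a>0$. The sub-diagonal entries are
\[
c_j=p_j=\begin{cases} (-1)^j=-1, & j \mbox{ odd},\\ s(-1)^j=s, & j \mbox{ even},\end{cases}
\]
since $p_{i-1}=r^{i-1}$ for $i$ even and $sr^{i-1}$ for $i$ odd, with $j=i-1$. These are all real and nonzero, so $A$ is proper and Theorem \ref{hh} applies.

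Next we solve the condition $b_j=e^{2i\theta}\overline{c_j}$. For odd $j$ it reads $a=-e^{2i\theta}$. Taking moduli gives $a=1$, which is excluded, so no odd $j$ belongs to $J$. For even $j$ (these exist because $n\geq4$) it reads $a=se^{2i\theta}$. Taking moduli gives $a=s$, also excluded, so no even $j$ belongs to $J$. Hence $J=\emptyset$ for every $\theta$, condition (1) of Theorem \ref{hh} fails, and $\partial W(A)$ has no flat portion in any direction.

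The only point that needs care is confirming the pattern of signs and moduli of the $p_j$ at $r=-1$ for both parities of $n$. The rest follows from the modulus comparison, in the same way as the proof of Theorem \ref{flat}. We also note that the hypothesis $n\geq4$ is used only to guarantee that both odd and even indices $j$ occur; the emptiness of $J$ itself needs nothing more.
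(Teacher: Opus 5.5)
Your proposal is correct and is essentially the paper's own proof: both use condition (1) of Theorem~\ref{hh} to show that $J=\{j: b_j=e^{2i\theta}\overline{c_j}\}$ is empty for every $\theta$. Odd $j$ would force $a=-e^{2i\theta}$ and even $j$ would force $a=se^{2i\theta}$, and neither is possible when $a\neq 1$ and $a\neq s$. Your explicit properness check and your comparison of moduli are a slightly cleaner version of the paper's step ``$a=\pm1$ or $a=\pm s$''.
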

\begin{proof}
     According to condition of Theorem \ref{hh}(1), the relation  $b_j=e^{2i\theta}\overline{c_j}$ yields $a=-e^{2i\theta}$ or  $a=se^{2i\theta}$, implies either  $a=\pm1$ or $a=\pm s$, which is not possible for any $\theta$ by the given conditions. Thus, according to Theorem \ref{hh}(1), the  set $J$ is empty. Hence the result follows. 
\end{proof}
The following is a supporting example of the above result.
\begin{example}
    Consider $a=2, s=7$ and the order of the matrix is $n=8$. Then, the shape of $W(A(8,7,2,-1))$ is given in the Figure \ref{W(A(8,7,2,-1))}.
\end{example}
\begin{figure}[ht!]
         \centering
         \includegraphics[scale=.4]{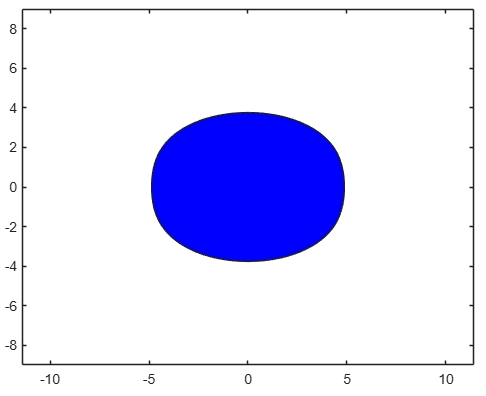}
        \caption{$W(A(8,7,2,-1))$}\label{W(A(8,7,2,-1))}
\end{figure}

\begin{remark}
Combining Theorems \ref{2flat} and \ref{parallel_y}, for $n\geq 5$, we conclude that for $a=s=1$, the boundary of $W(A(n,-1))$ contains two flat portions parallel to the $x$-axis, lying on the lines $y=\pm 1$, and two flat portions parallel to the $y$-axis, lying on the lines $x=\pm 1$. Moreover, Theorem \ref{hh}(1) implies that  the  boundary of $W(A(n,-1))$ possess only that flat portions which are parallel to the coordinate axes. Hence for $n\geq 5$, the boundary of $W(A(n,-1))$ contains exactly four flat portions, which also follows from \cite[Theorem 8]{chien2011numerical}.
\end{remark}
\begin{note}
 All figures are plotted using the program given in \cite{cowen1995effective}.
\end{note}
\section{Declarations}
\textit{Acknowledgements:} Mr. Arobinda Ghosh would like to thank UGC, Govt. of India for the financial support (NTA Ref. No. 211610122278) in the form of fellowship.\\
\textit{Author Contributions:} All the authors contributed equally to this manuscript and reviewed it. \\
 \textit{Data Availability :} No datasets were generated or analyzed during the current study. \\
\textit{Conflict of interest:} There is no competing interest.\\

\bibliographystyle{amsplain}

\begin{thebibliography}{99}


\bibitem{brown2004matrices}Brown, E.S. and Spitkovsky, I.M.: On flat portions on the boundary of the numerical range. Linear Algebra Appl., 390, 75-109, (2004).



\bibitem{chien1996numerical}Chien, M.T.: On the numerical range of tridiagonal operators. Linear Algebra Appl., 246, 203-214, (1996).




\bibitem{chien2011numerical} Chien, M.T. and Nakazato, H.: The numerical range of a tridiagonal operator. J. Math. Anal. Appl., 373(1), 297-304, (2011).

\bibitem{chien1998geometric} Chien, M.T., Yeh, L., Yeh, Y.T. and  Lin, F.Z.: On geometric properties of the numerical range. Linear Algebra Appl., 274(1-3), 389-410, (1998).





\bibitem{chien2015numerical}Chien, R.T. and Spitkovsky, I.M.: On the numerical ranges of some tridiagonal matrices. Linear Algebra Appl., 470, 228-240, (2015).






\bibitem{cowen1995effective}Cowen, C.C. and Harel, E.: An effective algorithm for computing the numerical range. Unpublished manuscript, (1995).


\bibitem{gustafson1997numerical} Gustafson, K.E. and Rao, D.K.:  Numerical range. The Field of Values of Linear Operators and Matrices (1-26). Springer, New York, (1997).


\end{thebibliography}

\end{document}